\renewcommand\labelenumi{(\roman{enumi})}
\renewcommand\theenumi\labelenumi
\newcommand{\Exp}{\operatorname{Exp}}
\newcommand{\Inc}{\operatorname{Inc}}
\newcommand{\Dec}{\operatorname{Dec}}
\newcommand{\Bin}{\operatorname{Bin}}
\newcommand{\Unif}{\operatorname{Unif}}
\newcommand{\R}{\mathbb{R}}
\newcommand{\N}{\mathbb{N}}
\newcommand{\eps}{\varepsilon}
\newtheorem{theorem}{Theorem}
\newtheorem{lemma}[theorem]{Lemma}
\newtheorem{claim}[theorem]{Claim}
\newtheorem{prop}[theorem]{Proposition}
\numberwithin{equation}{section}
\newcommand{\wt}{\widetilde}
\newcommand{\EXP}{\mathbb{E}}
\newcommand{\PROB}{\mathbb{P}}
\newcommand{\var}{\mathrm{Var}}
\newcommand{\G}{{\mathcal G}}
\begin{document}
	
	\title{{\bf Increasing paths in random temporal graphs}
		\author{
			Nicolas Broutin
			\thanks{LPSM, Sorbonne Universit\'e, 4 Place Jussieu, 75005 Paris}
			\thanks{Institut Universitaire de France (IUF)}
			\and
			Nina Kam\v{c}ev
			 \thanks{Department of Mathematics, Faculty of Science, University of Zagreb, Croatia. 	 Research supported by the European Union’s Horizon 2020 research and innovation programme [MSCA GA No 101038085].}
			\and  
			G\'abor Lugosi
			\thanks{Department of Economics and Business, Pompeu
				Fabra University, Barcelona, Spain}
			\thanks{ICREA, Pg. Lluís Companys 23, 08010 Barcelona, Spain}
			\thanks{Barcelona Graduate School of Economics}
			\thanks{Research supported by Ayudas Fundación BBVA a Proyectos de Investigaci\'on Científica 2021 and the Spanish Ministry of Economy and Competitiveness, Grant PGC2018-101643-B-I00 and FEDER, EU}
		}
	}
	
	\maketitle
	
	\begin{abstract}We consider random temporal graphs, a version of the classical Erd\H{o}s--Rényi random graph $G(n,p)$ where additionally, each edge has a distinct random time stamp, and connectivity is constrained to sequences of edges with increasing time stamps. We study the asymptotics for the distances in such graphs, mostly in the regime of interest where $np$ is of order $\log n$. 
We establish the first order asymptotics for the lengths of increasing paths: the lengths of the shortest and longest paths between typical vertices, the maxima of these lengths from a given vertex, as well as the maxima between any two vertices; this covers the (temporal) diameter. 
	\end{abstract}

	
\section{Introduction}
		\label{sec:intro}

	\subsection{Motivation and model}
	\label{sec:motivation_model}	
	A \emph{temporal graph}  $G=(V,E, \pi)$  is a finite simple graph together with an ordering  $\pi: E \to \{1, 2, \ldots, |E|\}$
	of the edges. The ordering may be interpreted as time stamps on the edges. An edge $e$ precedes
	another edge $f$ if $\pi(e) < \pi(f)$. Temporal graphs naturally model time-dependent propagation processes, for instance social interactions or infection processes.
	
	The specific model we study is that of \emph{random simple temporal graphs}, which was explicitly addressed by
	Casteigts, Raskin, Renken, and Zamaraev \cite{CaRaReZa22}, as well as
	Becker, Casteigts, Crescenzi, Kodric, Renken, Raskin, and Zamaraev \cite{BeCaCrKoReRaZa22}. However, the same model had already been considered by
	Angel, Ferber,  Sudakov, and Tassion \cite{AnFeSuTa20}. A number of authors have studied related random temporal graph models -- we refer the reader to~\cite[Section 6]{CaRaReZa22} for a thorough literature review. Specifically, reachability in similar models which allow repeated edge labels was studied since the 1970s by~\citet{bs79,ditmarsch17,fg85,haigh81,moon72,msra16,msa20}. We return to these related models in Section~\ref{sec:models}, since our methods extend to them in a straightforward way.

	In the \textit{random simple temporal graph model}, $G$ is an Erd\H{o}s--R\'enyi random graph on $n$ vertices and edge probability $p\in [0,1]$
	and the time stamps are generated by a uniform random permutation on the edges.
	Such a random graph may be conveniently generated by assigning independent, identically 
	distributed random \textit{labels} to each edge of the complete graph on $n$ vertices.
	In most of this article, the label assigned to an unoriented edge $\{i,j\}$ is an exponential random variable $W_{i,j}$ (of parameter $1$),
	where $1\le i<j\le n$.
	An edge is kept if and only if $W_{i,j}\le \tau$ where $\tau  = - \log (1-p)$. Note that
	$\PROB\{W_{i,j}\le \tau\} = p$ and therefore the graph obtained by keeping only edges with $W_{i,j}\le \tau$
	indeed is an Erd\H{o}s-R\'enyi random graph\footnote{Note that $-\log(1-p) = p + O(p^2)$, and none of our results are sensitive to such lower-order changes of~$p$.} $G\sim \G(n,p)$. Hence, $V=[n]$ and
	$E=\left\{\{i,j\}: W_{i,j}\le \tau\right\}$.
	An edge $\{i,j\}$ \emph{precedes} the edge $\{i',j'\}$ if an only if $W_{i,j} \le W_{i',j'}$. We denote such a random graph generated by the labels $W=(W_{i,j})$ by $G_p(W)$.
	
	Our main objects of interest in temporal graphs are \emph{increasing} paths.
	A path $(v_1,\ldots,v_k)$ is \textit{increasing} if the edge labels of the path 
	are increasing, that is,
	$W_{v_{\ell},v_{\ell+1}}\le W_{v_{\ell+1},v_{\ell+2}}$ for all
        $\ell \in \{1,\ldots,k-2\}$.  The \emph{length} of a path
        $(v_1,\ldots,v_k)$ is $k-1$. We call $v_1$ and $v_k$ the
        \emph{starting point and endpoint} of the path, respectively.


	We study longest and shortest increasing paths. For a pair of
        vertices $i,j\in [n]$, we may define $\ell(i,j)$ and $L(i,j)$
        as the minimum and maximum length of any increasing path from
        $i$ to $j$. We refer to $\ell(i,j)$ as the \textit{distance}
        from $i$ to $j$ when the `direction' is clear from the
        context. In most of the paper we focus on the regime
        $ np = c \log n$ for a constant $c$. The main reasons for
        singling out this range are that it is where the phase transition occurs, and where the path lengths exhibit an intriguing behaviour. Indeed,
        as shown below, for smaller values of $p$, there are no increasing paths between  two
        typical vertices. When $p$ is much larger, 
the longest path in the entire graph has roughly the same length as the longest path between two typical vertices.
       However, an interesting phenomenon in the regime $np \sim c\log n$ is that
                  the longest path in the entire graph is significantly longer
        than the longest path from a typical vertex. This poses some nontrivial challenges as it is
        difficult to use constructive methods for finding the longest
        path (such as~branching processes or coupling arguments.)
	
	The phase transition occurring around $p = c \log n / n$ was first studied by~\citet{CaRaReZa22}.\footnote{We usually write $\log n/n$ for $(\log n)/n$; we believe that it should not cause any confusion.} Specifically, they studied the following properties: a typical pair of vertices is \textit{connected} (by an increasing path), a typical vertex can reach all other vertices, and any pair of vertices is \textit{connected}. They found that the threshold probabilities for these three properties are $\log n / n$, $2 \log n / n$ and $3 \log n / n $ respectively. In Theorem~\ref{thm:shortestpaths}, we strengthen their results by finding paths of (asymptotically) minimal length throughout this probability range, addressing~\cite[Conjecture 5]{Ca-arxiv} in a strong form. 
	The `longest shortest' increasing path, denoted $\max_{i,j \in [n]}\ell(i,j)$, can be interpreted as the \textit{diameter} of our temporal graph; such shortest paths also differ significantly between typical vertex pairs and `most distant' vertex pairs, in contrast to other models such as the Erd\H{o}s-R\'enyi random graph.
	
	The authors of~\cite{CaRaReZa22} also noticed an intriguing connection between
	their phase-transition results and Janson’s celebrated results on percolation in weighted graphs~\cite{janson99}. We unveil an explicit connection between $G_p$ and a random recursive tree, which is also related to minimum-weight paths in Janson's model~\cite{abl10}. Thus, our methods shed some light on the parallels between paths in these two random-graph models.

\subsection{Main results}
	\label{sec:main_results}

In this section we present the main findings of this paper.
The subjects of our study are the following random variables.
\begin{itemize}
\item[(a)]
$L(1,2)$ and $\ell(1,2)$, that is, the lengths of the longest and shortest increasing paths between two fixed vertices (say vertex $1$ and $2$);
\item[(b)]
$\max_{j\in \{2,\ldots,n\}} L(1,j)$ and $\max_{j\in \{2,\ldots,n\}} \ell(1,j)$, the maximum length of the longest and shortest increasing paths starting at a fixed vertex;
\item[(c)]
$\max_{i,j\in [n]} L(i,j)$ and $\max_{i,j\in [n]} \ell(i,j)$, that is, the maximal length of the longest and shortest increasing paths in $G$.
\end{itemize}

Angel, Ferber,  Sudakov, and Tassion \cite{AnFeSuTa20} showed that if $p=o(n)$ such that
$pn/\log n \to \infty$, then $\max_{i,j\in [n]} L(i,j) \sim_p enp$, answering a question of~\citet{ll16}. Naturally, they asked about the corresponding estimate for $np = \Theta(\log n)$. When $pn/\log n \to \infty$, the upper bound $\max_{i,j\in [n]} L(i,j) \leq (1+o(1)) enp$ follows from a first-moment argument (see Section~\ref{sec:first_moment-longest}), and the same computation suggests that in  this range, the length of the longest path between a \textit{typical} pair of vertices (denoted $L(1,2)$) is also asymptotically $epn$. We confirm this intuition with a quick observation. More importantly, the situation
changes when $p = c \log n / n$, and gaps appear between the typical behavior of $L(1,2)$, $\max_{j \in [n]}L(1, j)$ and $\max_{i < j \in [n]}L(i,j)$. Consequently, we use three different techniques for determining the three quantities, as detailed in Section~\ref{sec:techniques_plan}. Note that for $c <1$, with high
probability, there is no increasing path between vertex $1$ and vertex $2$ (which was also shown by~\cite{CaRaReZa22}), while we still have
$\max_{j\in \{2,\ldots,n\}} L(1,j)\sim_p ec\log n$.



In order to formulate the main results, we need to introduce a few constants, depending on the parameter $c$. 
For any $c>0$, define
\begin{align}\label{eq:def_alpha}
	\alpha(c) &:= \inf\{x>0: x\log (x/ec)=1\}\,
\end{align}
and, for $c>1$, 
\begin{align}
	\beta(c) &:= \sup\{x>0: x\log (x/ec)=-1\},\\
	\gamma(c) &:= \inf\{x>0: x\log (x/ec)=-1\}\,.
	\label{eq:gamma}
\end{align}

Since $x\log (x/ec)$ is a strictly convex function of $x$, the equations $x\log (x/ec)=1$ and $x\log (x/ec)=-1$
have at most two solutions for any $c>0$. The real numbers $\beta(c)$ and $\gamma(c)$ are the two solutions of the latter equation.
When $c=1$, there is only one solution and $\beta(1)=\gamma(1)=1$. For $c<1$ there is no solution, while
for $c>1$ there are two distinct solutions. 
On the other hand, the equation $x\log (x/ec)=1$ has a unique solution for all $c>0$ which we denote by $\alpha(c)$.
Observe also that as $c\to \infty$, $\alpha(c)/c \to e$ and $\beta(c)/c \to e$. 
An interesting example value is $\alpha(1)\approx 3.5911$.

The following result characterises the first-order asymptotics for the
lengths of longest increasing paths in the regime $np = c \log n$.
For a sequence of events $(E_n)$ we say that $E_n$ holds with high
probability if $\PROB\{E_n\} \to 1$.
We write $X_n \sim_p a_n$
to denote that $X_n/a_n \to 1$ in probability.

\begin{theorem}[{\sc Longest paths}]
\label{thm:longestpaths_log}
Suppose that $p=c\log n/n$ for some $c>0$. Then
\begin{compactenum}[(i)]
\item \label{it:longest-12} if $c\in (0,1)$, there is no increasing path between $1$ and $2$ with high probability, and if $c\ge 1$,
$L(1,2) \sim_p \beta(c) \log n$;
\item \label{it:longest-1j} for all $c>0$, $\max_{j\in \{2,\ldots,n\}} L(1,j) \sim_p ec\log n$\,.
\item \label{it:longst-ij} for all $c>0$, $\max_{i,j\in [n]} L(i,j) \sim_p \alpha(c) \log n$\,.
\end{compactenum}
\end{theorem}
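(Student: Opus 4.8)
The plan is to prove the three parts separately, since — as the introduction stresses — they require genuinely different techniques. Throughout I use the exponential-label representation $G_p(W)$: an increasing path of length $k$ from $u$ to $v$ corresponds to a sequence of distinct vertices $u=w_0,w_1,\dots,w_k=v$ with $W_{w_0 w_1}\le W_{w_1 w_2}\le\cdots\le W_{w_{k-1}w_k}\le\tau$, where $\tau=-\log(1-p)\sim p=c\log n/n$.

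\textbf{Upper bounds via first moments.} For part (\ref{it:longst-ij}), let $N_k$ be the number of increasing paths of length $k$ in $G$ (over all ordered endpoint pairs). A path is specified by an ordered tuple of $k+1$ distinct vertices, contributing a factor $\le n^{k+1}$; the labels must be increasing and all $\le\tau$, an event of probability $\tau^k/k!$ (the $k$ i.i.d.\ uniform-on-$[0,\tau]$ labels fall in increasing order with probability $1/k!$, after rescaling). Hence $\E N_k \le n^{k+1}\tau^k/k!$. Using $k!\ge (k/e)^k$ and $\tau\sim c\log n/n$, we get $\E N_k \le n \big(e\tau n/k\big)^k = n\exp\!\big(k\log(ec\log n/k)\big)$. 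Writing $k=x\log n$, the exponent is $\log n\big(1 - x\log(x/ec)\big)$, so $\E N_k = n^{1+o(1)}$ precisely up to $x=\alpha(c)$ and is $n^{-\Omega(1)}$ for $x>\alpha(c)+\eps$; Markov's inequality then gives the upper bound $\max_{i,j}L(i,j)\le (\alpha(c)+o(1))\log n$ w.h.p. The \emph{same} computation restricted to paths starting at vertex $1$ costs a factor $n^{k}$ instead of $n^{k+1}$, i.e.\ drops the leading "$1+$", yielding exponent $\log n\big(-x\log(x/ec)\big)$, which is negative for $x>\beta(c)$ and for $x$ close to $0$ (when $c>1$); so $\max_j L(1,j)\le (\beta(c)+o(1))\log n$ w.h.p. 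But $ec>\beta(c)$ for $c>1$ fails — wait: since $\beta(c)$ solves $x\log(x/ec)=-1$ and $\beta(c)/c\to e$, one checks $ec<\beta(c)$ is false; rather the exponent at $x=ec$ equals $0$, and the \emph{two}-point structure means the relevant threshold for $\max_j L(1,j)$ is exactly $ec$, matching part (\ref{it:longest-1j}). The first-moment bound for $L(1,2)$ (fix both endpoints) costs $n^{k-1}$, exponent $\log n(-1-x\log(x/ec))$, negative outside $[\gamma(c),\beta(c)]$, giving $L(1,2)\le(\beta(c)+o(1))\log n$.

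\textbf{Lower bounds.} This is where the work lies and where the three cases diverge. For part (\ref{it:longest-1j}), the lower bound $\max_j L(1,j)\ge (ec-o(1))\log n$: I would run a breadth-first exploration of increasing paths from vertex $1$, tracking, at "time-label" $t\in[0,\tau]$, the set of vertices reachable by an increasing path whose last label is $\le t$; this is essentially a growing process coupled with a branching/Poisson approximation. The key is that the number of such reachable vertices, suitably parametrised, behaves like a Yule-type process, and one shows that with positive probability — boosted to high probability by a second-moment / sprinkling argument over the $n-1$ possible endpoints — some path attains length $(ec-o(1))\log n$. For part (\ref{it:longst-ij}), the lower bound $\max_{i,j}L(i,j)\ge(\alpha(c)-o(1))\log n$ is the \emph{main obstacle}: constructive/branching methods fail here because, as the authors note, the longest path in the whole graph is strictly longer than from any fixed vertex, so one cannot "grow" it from a pre-chosen root. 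I expect the argument to be a delicate second-moment method on $N_k$ with $k=(\alpha(c)-\eps)\log n$ — one must show $\var(N_k)=o((\E N_k)^2)$, which requires carefully bounding the contribution of pairs of paths sharing a sub-path; because paths can share vertices in the middle while having independent label-orderings on the disjoint parts, the covariance estimate is subtle, and one likely needs to restrict to paths that are "spread out" (e.g.\ condition on the path visiting a prescribed set of "checkpoint" vertices at prescribed label-windows) to tame the correlations. For part (\ref{it:longest-12}), the case $c<1$ (no path w.h.p.) is just the first-moment bound above since the exponent $-1-x\log(x/ec)<0$ for all $x$ when $c<1$ (the equation has no root); and for $c\ge1$ the lower bound $L(1,2)\ge(\beta(c)-o(1))\log n$ should follow by combining the exploration from vertex $1$ (reaching length $\sim ec\log n$ to many vertices) with an exploration "backwards in time-label" from vertex $2$, then matching the two fronts — this two-sided growth is cleaner than the one-sided diameter bound because both endpoints are fixed.

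\textbf{Assembly.} Each part then follows by pairing the matching upper and lower bounds; the constants $\alpha,\beta,\gamma$ enter exactly as the roots of $x\log(x/ec)=\pm1$ forced by the first-moment exponents, and the strict convexity of $x\mapsto x\log(x/ec)$ (noted after \eqref{eq:gamma}) guarantees these roots are the correct, unambiguous thresholds. I would present part (\ref{it:longest-1j}) first (it is self-contained and its exploration process is reused), then (\ref{it:longest-12}), then the hardest part (\ref{it:longst-ij}).
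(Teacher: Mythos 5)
Your upper bounds are correct and are exactly the paper's first-moment computations (for $\max_j L(1,j)$ your exponent bookkeeping momentarily points at $\beta(c)$ before you self-correct to the true threshold $ec$; note $\beta(c)<ec$, and the paper simply takes $\ell=\lceil(1+\eps)enp\rceil$ and bounds $\E Y_\ell\le(1+\eps)^{-\ell}$). The case $c<1$ of part (i) is also fine, modulo summing the first moment over all lengths $k$, which the paper does by splitting at $3c\log n$. The genuine gaps are all in the lower bounds, where you name the right genre of argument but omit the steps that actually produce the constants. For part (ii): for $c>1$ a single exploration from vertex $1$ cannot work, because it exhausts the $n$ vertices and its exploration tree (a uniform recursive tree, height $(e+o(1))\log(\text{size})$) has height at most about $e\log n<ec\log n$. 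The paper's proof splits the label range $[0,p]$ into layers of width at most $\log n/n$, embeds a recursive-tree path of length $(1-o(1))e\,q n$ in each layer restricted to unused vertices, and concatenates; this composition step is essential and absent from your sketch (and even for $c\le 1$ you are implicitly invoking the height asymptotics of the recursive/Yule tree without stating it).

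For part (iii), you correctly diagnose that the plain second moment fails because of pairs of paths sharing a short segment, but your proposed fix (checkpoint vertices at prescribed label windows) is only half of the paper's mechanism. Prescribing the label profile along the path (the paper's ``$C$-legal'' condition, from~\cite{abl10}) does not by itself kill a competitor $Q$ that leaves $P$ in one long detour. The paper additionally requires every vertex of $P$ to be \emph{typical} -- no atypically long increasing or decreasing path leaves it within any small label window -- and then proves a local-optimality lemma: since labels along a path of length $\sim\alpha(c)\log n$ increase at rate about $c/(\alpha(c) n)<1/(en)$, any detour off $P$ is strictly shorter than the segment of $P$ it replaces, so pairs intersecting in few components with a non-negligible symmetric difference are excluded \emph{deterministically}, and Chebyshev is applied only to the remaining intersection patterns (many components, or almost-complete overlap). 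Without this, or an equivalent device, the variance bound you need is simply not available. For part (i) with $c\ge1$, the two-front idea is the right shape but your version does not yield $\beta(c)$: within the half label budget $[0,p/2]$ one must grow trees of increasing (resp.\ decreasing) paths at a \emph{controlled atypical rate} $x\in(\gamma(c),\beta(c))$, because the number of endpoints at depth $\tfrac x2\log n$ is of order $n^{\psi(x)/2}$ with $\psi(x)=x\log(ec/x)$, and this exceeds $n^{1/2+\delta}$ exactly when $\psi(x)>1$; the paper implements this by a branching process on sub-intervals of labels (with Kesten--Stigum to get the frontier size) and then joins the two frontiers by a single edge with label in a middle window. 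Running the natural maximal-rate exploration ``to length $\sim ec\log n$'' and matching fronts, as you propose, neither produces paths of the right length $\beta(c)\log n$ nor gives any reason the two frontiers are large enough to be joined.
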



We also observe that the result of~\citet{AnFeSuTa20} can be strengthened. Namely, when $np /\log n \to \infty$,  even the longest path between two fixed vertices has length about $e np$. Note that this is consistent with the fact that $\beta(c) \to e$ as $c \to \infty$.
\begin{prop}[{\sc Longest paths for $np \gg \log n$}]
\label{thm:longestpaths_infinity}
Assume that $p=o(1)$, and $pn/\log n \to \infty$, then $L(1,2) \sim_p enp$.
\end{prop}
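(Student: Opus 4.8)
The upper bound is immediate from a first-moment computation: the expected number of increasing paths from $1$ to $2$ of length $\ell$ is at most $n^{\ell-1}p^{\ell}/\ell!\le \tfrac1n(enp/\ell)^{\ell}$, and summing this over $\ell\ge(1+\eps)enp$ (where the summand decays geometrically) gives $o(1)$; thus $L(1,2)\le(1+o(1))enp$ with high probability — a special case of the bound used for $\max_{i,j}L(i,j)$ in Section~\ref{sec:first_moment-longest}. So the real content is the matching lower bound $L(1,2)\ge(1-o(1))enp$, which I would prove by a three-window splicing argument.

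Recall $\tau=-\log(1-p)=(1+o(1))p$. Since $np/\log n\to\infty$, fix $t_0$ with $\log n/n\ll t_0\ll p$ (say $t_0=\sqrt{p\log n/n}$) and split the label interval into $W_-=[0,t_0]$, $W_0=(t_0,\tau-t_0]$ and $W_+=(\tau-t_0,\tau]$. The edges whose label lies in the middle window $W_0$ form, with the induced ordering, a random simple temporal graph with parameter $\hat p=(1+o(1))p$, and $n\hat p/\log n\to\infty$; applying the theorem of \citet{AnFeSuTa20} to the subgraph it induces on $[n]\setminus\{1,2\}$ yields, with high probability, an increasing path $\hat P=(w_0,\dots,w_m)$ disjoint from $\{1,2\}$, all of whose labels lie in $W_0$, with $m\ge(1-o(1))en\hat p=(1-o(1))enp$. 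It then remains to attach $1$ to the head $w_0$ using small labels and the tail $w_m$ to $2$ using large labels.

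For the first attachment I would reveal the $W_-$-edges: conditionally on all the information in $W_0$, memorylessness of the exponential labels guarantees that the present edges with label in $W_-$ still form a random simple temporal graph, with parameter $(1+o(1))t_0\gg\log n/n$, on the full vertex set. Restricting it to $[n]\setminus(\{2\}\cup\{w_1,\dots,w_m\})$ — which still contains $1$ and $w_0$ and has $n-o(n)$ vertices — the directed connectivity threshold of \citet{CaRaReZa22} (strengthened in Theorem~\ref{thm:shortestpaths}) produces, with high probability, an increasing path $Q_1$ from $1$ to $w_0$ with all labels in $W_-$; by the vertex-symmetry of the model this holds uniformly over the (conditionally independent) identity of $w_0$. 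Symmetrically, after further conditioning on $W_-$ as well and reversing the order on $W_+$, the same connectivity estimate gives an increasing path $Q_2$ from $w_m$ to $2$ with all labels in $W_+$, chosen inside $[n]\setminus(\{1,w_0,\dots,w_{m-1}\}\cup V(Q_1))$. Since every label of $Q_1$ precedes every label of $\hat P$, which precedes every label of $Q_2$, and the three pieces are vertex-disjoint apart from the junctions $w_0,w_m$, their concatenation is an increasing path from $1$ to $2$ of length at least $m\ge(1-o(1))enp$, which together with the upper bound yields $L(1,2)\sim_p enp$.

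The routine-but-delicate part is the conditioning bookkeeping in the last step: one must check at each stage that conditioning on the already-revealed windows leaves genuinely fresh i.i.d.\ randomness in the next window (this is exactly memorylessness of the exponential labels), and that the two connecting paths avoid the bulk path $\hat P$ and one another. Neither is serious, precisely because $m=o(n)$ and $t_0=o(p)$, so the induced subgraphs on which the connectivity threshold is invoked always have $n-o(n)$ vertices and label-density comfortably above $\log n/n$; everything else is a direct appeal to \citet{AnFeSuTa20} and \citet{CaRaReZa22}, which is what makes this a "quick observation" once those results are in hand.
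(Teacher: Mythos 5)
Your proposal is correct and takes essentially the same route as the paper: the paper likewise splits the label range into three windows (of width $2\log n/n$ at each end), takes the extremities $i^\star,j^\star$ of the long increasing path of \citet{AnFeSuTa20} in the middle window, and joins $1$ to $i^\star$ and $j^\star$ to $2$ through the side windows via the two-fixed-vertices connectivity statement (the paper invokes its Theorem~\ref{thm:longestpaths_log}~(\ref{it:longest-12}) where you invoke \citet{CaRaReZa22} and Theorem~\ref{thm:shortestpaths}). Your explicit bookkeeping on conditioning and vertex-disjointness is sound and, if anything, slightly more careful than the paper's brief concatenation argument.
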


We also find the asymptotics for the lengths of shortest paths. In particular, this confirms Conjecture 5 from~\cite{Ca-arxiv} (which is an extended version of~\cite{CaRaReZa22}).

\begin{theorem}[{\sc Shortest paths}]\label{thm:shortestpaths}
For $c\ge 1$, let $\gamma(c)$ be the smallest $x>0$ such that $x\log (x/ec) +1=0$, as defined in \eqref{eq:gamma}. 
Let $p= c \log n/n$. Then
\begin{compactenum}[(i)]
	\item \label{it:shortest-12} for all $c> 1$, $\ell(1,2) \sim_p \gamma(c) \log n$;
	\item \label{it:shortest-1j} for all $c> 2$, $\max_{i\in [n]} \ell(1,i) \sim_p \gamma(c-1) \log n$;
	\item \label{it:shortest-ij} for all $c> 3$, $\max_{i,j\in [n]} \ell(i,j) \sim_p \gamma(c-2) \log n$\,.
\end{compactenum}
\end{theorem}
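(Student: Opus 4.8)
The statement comprises three pairs of bounds. I would prove all three lower bounds by first‑moment path counting, and all three upper bounds by a ``meeting in the middle'' exploration; the shift from $\gamma(c)$ in \eqref{it:shortest-12} to $\gamma(c-1)$ and $\gamma(c-2)$ in \eqref{it:shortest-1j}–\eqref{it:shortest-ij} is produced by a union bound over the $n$, respectively $n^2$, choices of an exceptional vertex or pair, which is precisely what trades a factor $\log n$ in $np$ for a unit decrease of $c$.

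\emph{Lower bounds.} Let $N_k$ be the number of length‑$k$ increasing paths from $1$ to $2$ in $G_p$. Choosing the internal vertices in order, and using that a fixed sequence of $k$ edges has increasing labels all below $\tau=-\log(1-p)$ with probability $p^k/k!$, one gets $\E N_k=(1+o(1))(np)^k/(n\,k!)=n^{-x\log(x/ec)-1+o(1)}$ when $k=x\log n$. Since $x\mapsto x\log(x/ec)$ is strictly convex with minimum $-c<-1$ (here $c>1$) at $x=c$, the equation $x\log(x/ec)=-1$ has exactly the two roots $\gamma(c)<c<\beta(c)$, and on $(0,\gamma(c)-\eps]$ the function $x\mapsto -x\log(x/ec)-1$ is negative and bounded away from $0$. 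Hence $\sum_{k\le(\gamma(c)-\eps)\log n}\E N_k\to 0$, and Markov's inequality gives $\ell(1,2)\ge(\gamma(c)-\eps)\log n$ with high probability, proving \eqref{it:shortest-12}. For \eqref{it:shortest-1j} and \eqref{it:shortest-ij} this is not enough, as $\max_i\ell(1,i)$ and $\max_{i,j}\ell(i,j)$ exceed the typical value $\gamma(c)\log n$; I would instead show that for a \emph{fixed} vertex $i$ and $k=(\gamma(c-1)-\eps)\log n$ one has $\PROB\{\ell(1,i)>k\}\ge n^{-1+\delta}$ for some $\delta(\eps)>0$, by exhibiting an explicit obstruction of that probability — e.g.\ forcing the timestamps of all $\approx c\log n$ edges at $i$ to be small enough that reaching $i$ effectively takes place in a $G(n,(c-1)\log n/n)$, whose typical distance is $\gamma(c-1)\log n$ by \eqref{it:shortest-12} applied with parameter $c-1$ — and then run a second‑moment argument on the number of such $i$ to conclude that one exists with high probability. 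Part \eqref{it:shortest-ij} is analogous, with $n^{-2+\delta}$ and a second moment over pairs.

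\emph{Upper bounds.} For \eqref{it:shortest-12} I would meet in the middle at time $\tau/2$. A breadth‑first exploration of increasing paths from $1$ using only edges with label $\le\tau/2$ automatically sees only paths whose labels are all $\le\tau/2$, and it is independent of the corresponding exploration of increasing paths \emph{into} $2$ using only edges with label $\ge\tau/2$, since the two depend on disjoint sets of the i.i.d.\ labels. The ``label $\le\tau/2$'' graph is essentially $G(n,(c/2)\log n/n)$, so by the count above the forward ball after $(\gamma(c)/2+\eps)\log n$ levels has expected size $n^{1/2+\delta}$: indeed $\tfrac{\gamma(c)}{2}\log\big(\tfrac{\gamma(c)/2}{ec/2}\big)=\tfrac{1}{2}\gamma(c)\log(\gamma(c)/ec)=-\tfrac{1}{2}$, so the exponent is $\tfrac12+\delta$ once $\eps>0$ of slack is added. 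A lower‑tail concentration bound for breadth‑first ball sizes (the ball is monotone, so only the lower tail matters; this can be obtained by a martingale or truncated second‑moment argument, or by domination by a supercritical branching process) upgrades this to ``$\ge n^{1/2+\delta}$ with high probability'', and the same holds, independently, for the backward ball from $2$. Two independent such sets of size $\ge n^{1/2+\delta}$ meet with high probability — one shows directly that the backward exploration hits the already‑revealed forward ball within the allotted levels, since each fresh vertex discovered at the last level lies in it with probability $\ge n^{-1/2+\delta}$ — and concatenating the two paths at a common vertex gives an increasing $1$–$2$ path of length $\le(\gamma(c)+2\eps)\log n$. For \eqref{it:shortest-1j} and \eqref{it:shortest-ij} one uses the same scheme, but now the exploration on the side of the exceptional vertex (resp.\ of the two exceptional vertices) must succeed simultaneously for all $\le n$ (resp.\ $\le n^2$) choices; surviving this union bound on the ``slow'' side reduces its effective parameter by $1$ (resp.\ by $2$), so with an appropriately chosen splitting point the total length becomes $\gamma(c-1)\log n$ (resp.\ $\gamma(c-2)\log n$). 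The hypotheses $c>2$ and $c>3$ are needed so that every vertex (resp.\ every pair) is reachable at all — the thresholds of Casteigts, Raskin, Renken and Zamaraev — and so that $\gamma(c-1)$, $\gamma(c-2)$ are well defined.

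\emph{Main obstacle.} The delicate point is to match the two constants in \eqref{it:shortest-1j} and \eqref{it:shortest-ij}: the exploration on the exceptional side must be robust enough that its failure probability for a fixed vertex or pair is $o(n^{-1})$ (resp.\ $o(n^{-2})$), as required by the upper bound, yet the obstruction used for the lower bound must be the genuine worst case, so that both bounds land on $\gamma(c-1)$ (resp.\ $\gamma(c-2)$) rather than on an intermediate constant. Establishing the lower‑tail concentration of breadth‑first ball sizes — already needed for \eqref{it:shortest-12} — and showing that the slow side still reaches size $n^{1/2+o(1)}$ within $\gamma(c-1)\log n$ (resp.\ $\gamma(c-2)\log n$) levels for \emph{every} exceptional vertex or pair are the technical heart of the argument.
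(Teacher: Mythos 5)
Your treatment of part (i) and of the lower bounds in (ii)--(iii) essentially coincides with the paper's: the first-moment count gives $\ell(1,2)\ge(\gamma(c)-\eps)\log n$; the worst-case lower bounds come from a local obstruction (a vertex with no incident label in a terminal sub-interval of length about $\log n/n$, found by a second moment over vertices/pairs) combined with part (i) at the reduced parameter; and the upper bound in (i) by growing a forward set from $1$ on labels $\le p/2$ and a backward set from $2$ on labels $\ge p/2$ until both have size $n^{1/2+\delta}$ is also the paper's route (there via a supercritical branching process, Kesten--Stigum, boosting over a constant number of ancestors, and a connecting edge whose label lies in a reserved middle interval). One inaccuracy: the two explorations are not independent ``because they use disjoint sets of labels'' --- each edge carries a single label, so the events $\{W_e\le p/2\}$ and $\{W_e\ge p/2\}$ are mutually exclusive rather than independent; this is harmless but must be handled by careful exposure (vertex-disjoint trees, unexposed connecting edges), as the paper does.

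The genuine gap is in your upper bounds for (ii) and (iii). You propose to run the meet-in-the-middle exploration from every candidate exceptional vertex (pair) and survive a union bound over $n$ (resp.\ $n^2$) choices, asserting that this ``reduces the effective parameter by $1$ (by $2$)''. For that you need, for every fixed vertex $i$, the exploration on the slow side to reach size $n^{1/2+o(1)}$ within the allotted number of levels with failure probability $o(n^{-1})$ (resp.\ $o(n^{-2})$ per pair). The tools you invoke --- lower-tail bounds for ball sizes, or domination by a supercritical branching process --- only give success probability $1-q$ with $q$ a positive constant (an extinction probability), and the dominant failure mode is exactly the local label configuration at $i$ (no usable edge in the top $\Theta(\log n/n)$ of the range), so you would need an additional boosting mechanism providing $\Omega(\log n)$ effectively independent restarts, uniformly over atypical $i$; you flag this as the ``technical heart'' but do not supply it. The paper sidesteps this entirely: it shows that from a typical vertex, using only labels in a middle range of length about $(c-1-\eps)\log n/n$ (resp.\ $(c-2-2\eps)\log n/n$), the ball of radius $(\gamma(c-1-\eps)+\eps)\log n$ (resp.\ $(\gamma(c-2-2\eps)+\eps)\log n$) covers all but $\eps n$ vertices with high probability --- this follows from part (i) plus exchangeability of vertices, with no uniform exploration estimate --- and then \emph{every} vertex attaches to this huge set by a single edge whose label lies in a reserved end interval of length $(1+\eps)\log n/n$, so the union bound is only over single-edge non-attachment events of probability about $n^{-(1+\eps)}$. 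Without such an attachment step (or a worked-out uniform boosting), your plan for (ii)--(iii) does not close.
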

Specifically, taking $c= 1+\eps$, $c= 2+\eps$ or $c= 3+\eps$, we are able to reprove the \textit{phase transition} established in~\citet{BeCaCrKoReRaZa22}, but also attaining the minimal possible distance (asymptotic to $\log n$) between the considered vertices.


For $c < 1$, a typical pair of vertices is not connected by any increasing path. Still, we can enquire about the set of increasing paths starting from a typical vertex. This is crucial for proving Theorem~\ref{thm:longestpaths_log}~(\ref{it:longest-1j}), but it is also of independent interest as we utilise a connection between the increasing paths from vertex $1$ and a random recursive tree. Specifically, we will cite the typical height of a random recursive tree, but many other properties can be transferred to random temporal graphs.

Let $B_\ell(v)$ be the set of vertices  from vertex $v$ by increasing paths in $G$ consisting of at most $\ell$ edges (including the vertex $v$), and note that $B_n(v)$ is the set of all vertices reachable from $v$. Occasionally we abbreviate $B_\ell = B_\ell(1)$.

\begin{theorem} [{\sc Reachability from vertex 1}]
	\label{thm:reachable-from-1}
	Let $p =c \log n/n$ with $c \leq 1$ and $\eps >0$. 
	\begin{compactenum}
		\item \label{it:height} With probability $1-n^{-\Omega(\eps^2)}$, the random temporal graph $G_p$ contains an increasing path from $1$ of length at least $ec\log n (1- \eps)$.
		\item \label{it:Bell-lower} With high probability, $|B_n(1)| \geq e^{pn(1-\eps)}$, and for most vertices in $ v\in B_n(1)$, we have $\ell(1, v) \leq pn(1+\eps)$. 
	\end{compactenum}
\end{theorem}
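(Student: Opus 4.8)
\emph{The exploration and its link to a random recursive tree.}
Reveal the exponential labels in increasing order and process the edges of $G_p$ --- those with label at most $\tau:=-\log(1-p)$ --- one at a time in that order, maintaining the set $R$ of vertices known so far to be reachable from $1$ by an increasing path: start with $R=\{1\}$, and whenever a processed edge $\{u,v\}$ has $u\in R$ and $v\notin R$, add $v$ to $R$ and declare $u$ its parent. Since the edges are processed in increasing label order, the label of an edge leaving $R$ always exceeds the label of the edge that first brought its $R$-endpoint into $R$; hence (i)~every root-to-vertex path of the resulting tree $\mathcal T$ on $R$ is an increasing path of $G_p$, and (ii)~$\mathcal T$ equals $B_n(1)$, since reachability is governed by the minimal last label $\rho(v)\le\tau$, which this process computes. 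In particular $\ell(1,v)\le\mathrm{depth}_{\mathcal T}(v)$ for every $v\in B_n(1)$, and $G_p$ contains an increasing path from $1$ of length at least $\mathrm{height}(\mathcal T)$. Relabelling the vertices of $\mathcal T$ by their order of exploration, a direct computation of the dynamics --- cleanest in continuous time, where a reached vertex recruits each not-yet-reached vertex at rate $1-O(\tau)$ --- shows that at step $k+1$ the new leaf is attached to a vertex of $\{1,\dots,k\}$ drawn, conditionally on the past, from a law within total variation $O(\tau)=O(\log n/n)$ of the uniform law on $\{1,\dots,k\}$, while the time needed to reach $k$ vertices is a sum of independent exponentials of parameters $\asymp k'(n-k')$, $k'<k$, concentrating tightly enough that $\log N_\tau = pn+O_p(1)$ for $N_\tau:=|\mathcal T|$. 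Thus $\mathcal T$ is a random recursive tree (RRT) up to an $O(\tau)$-perturbation of each attachment --- equivalently, a Yule process run for time $\approx pn$ --- and $N_\tau$ has order $n^c$ up to constant factors, with the one-sided bound $N_\tau\ge n^{c(1-\eps)}=e^{pn(1-\eps)}$ holding with probability $1-n^{-\Omega(\eps)}$.

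\emph{The first claim.}
Put $m:=\lceil n^{c(1-\eps/3)}\rceil$, so $N_\tau\ge m$ with probability $1-n^{-\Omega(\eps)}$. Restricted to its first $m$ explored vertices, $\mathcal T$ is a perturbed RRT on $m$ vertices; coupling its $m-1$ attachments one by one with those of a genuine RRT on $m$ vertices, the two trees agree except on an event of probability $O(m\tau)=n^{-\Omega(\eps)}$ (note $m\tau=o(1)$ because $c\le1$). Finally, the height of an RRT $\mathcal R_m$ on $m$ vertices obeys $\PROB\{\mathrm{height}(\mathcal R_m)<(1-\delta)e\log m\}\le m^{-\Omega(\delta^2)}$, which follows from a second-moment estimate on the number of root paths of a prescribed length $\ell$, whose expectation is $\sim(\log m)^\ell/\ell!$. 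Taking $\delta=\eps/3$ and combining these bounds, with probability $1-n^{-\Omega(\eps^2)}$ the tree $\mathcal T$ --- hence $G_p$ --- contains an increasing path from $1$ of length at least $(1-\eps/3)e\log m\ge(1-\eps)ec\log n$.

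\emph{The second claim.}
The bound $|B_n(1)|=N_\tau\ge e^{pn(1-\eps)}$ is the one-sided estimate above. For the typical distance, use $\ell(1,v)\le\mathrm{depth}_{\mathcal T}(v)$. In an RRT the depth of the $k$-th vertex has the law of the number of records of a uniform permutation of length $k-1$, namely a sum $\sum_{i=1}^{k-1}\mathrm{Ber}(1/i)$ of independent Bernoulli variables, so $\PROB\{\mathrm{depth}>(1+\eps)\log k\}\le k^{-\Omega(\eps^2)}$. Transferring this to $\mathcal T$ --- for $c<1$ through the global coupling, whose error $O(N_\tau\tau)$ is $o(1)$ because $N_\tau=\Theta(n^c)$; for $c=1$ by a direct argument, using that the $O(\tau)$-perturbation biases attachments towards earlier, hence shallower, vertices --- and summing over $k\le N_\tau$ yields $\EXP\,\#\{v\in\mathcal T:\mathrm{depth}_{\mathcal T}(v)>(1+\eps)pn\}=O(n^{c(1-\Omega(\eps^2))})$, which is $o(N_\tau)$ since $N_\tau$ has order $n^c$ and $\log N_\tau=(1+o(1))pn$. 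By Markov's inequality, with high probability all but a vanishing fraction of the vertices $v\in B_n(1)$ satisfy $\ell(1,v)\le(1+\eps)pn$.

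\emph{Main obstacle.}
The heart of the matter is the passage from the edge-by-edge exploration to the random recursive tree: one must (a)~compute the exploration dynamics accurately enough to see that successive attachments are $O(\tau)$-close to uniform, (b)~control $N_\tau$ through the concentration of the sum of nearly independent exponential waiting times, and (c)~verify that the increasing constraint is, with overwhelming probability, never active --- all tractable precisely because $\tau=o(1)$ when $c\le1$. Once the RRT link is in place, both claims reduce to classical facts about the height and the depth profile of random recursive trees; the only remaining care concerns the quantitative tail bounds and, for the second claim at $c=1$, the observation that the perturbation only helps.
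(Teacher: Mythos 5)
Your overall route --- linking an exploration of $B_n(1)$ to a random recursive tree --- is the same key idea as the paper's, but implemented differently: the paper (Lemma~\ref{lem:urrt}) prunes the set of unexposed edges at each discovered vertex to a common size $n_*$, so that the embedded tree is an \emph{exact} uniform recursive tree of prescribed size, and controls the total label budget with Janson's concentration bound; you keep the natural ``process edges in increasing label order'' exploration (which, as you correctly note, spans exactly $B_n(1)$) and pay an $O(\tau)$ total-variation error per attachment. For part~1 this is a legitimate alternative, since you only couple the first $m\le n^{1-\eps/3}$ discovered vertices, so the complement stays of size $\Theta(n)$ and the cumulative error is $n^{-\Omega(\eps)}$ --- modulo the max-degree and waiting-time concentration facts you assert without proof (the paper's events $E$ and $A_r$). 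One caveat: the quantitative lower tail $\PROB\{\mathrm{height}<(1-\delta)e\log m\}\le m^{-\Omega(\delta^2)}$ for the recursive tree does \emph{not} follow from a plain second-moment computation on the number of deep vertices (Chebyshev only gives a constant-probability bound); the paper cites Addario-Berry and Ford for this polynomial tail.

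The genuine gap is in your part~2 treatment of the distance claim. You transfer the RRT depth profile to the exploration tree via a coupling whose total error is $O(N_\tau\tau)$, which requires $N_\tau=o(n/\log n)$; this works for $c<1$ but fails at $c=1$, which the theorem includes. At $c=1$ the exploration saturates: a typical vertex is reachable with high probability (cf.\ Theorem~\ref{thm:longestpaths_log}~(\ref{it:longest-12})), so $|B_n(1)|=(1-o(1))n$, and most vertices of $B_n(1)$ are discovered in the late phase where $n-k$ is small. There the attachment law --- proportional to the number of still-unexposed edges from each discovered vertex into the shrinking complement --- is nowhere near an $O(\tau)$-perturbation of the uniform law, and your proposed fix (``the bias is towards earlier, hence shallower, vertices, so it only helps'') is a heuristic with no stochastic-domination argument behind it; it is not at all clear that late-stage depths are dominated by RRT depths, and that is exactly where most of $B_n(1)$ lives. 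The paper avoids this issue entirely: the embedded recursive tree is used only for the lower bound $|B_n(1)|\ge e^{pn(1-\eps/2)}$, while the statement that most reachable vertices are within distance $(1+O(\sqrt{\eps}))pn$ comes from the first-moment bound of Proposition~\ref{prop:upper} (the expected number of increasing paths from $1$ of length exceeding $(1+\eps)pn$ is at most $2e^{pn(1-\eps^2/4)}$, negligible compared with $|B_n(1)|$). Replacing your depth-transfer step by this counting argument closes the gap for all $c\le 1$.
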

We remark that~\ref{it:Bell-lower} can also be used to derive an elementary proof of Theorem~\ref{thm:shortestpaths}~\ref{it:shortest-12} for $c = 1+\eps$. (This is due to the fact that typical longest and shortest paths from $1$ to $v$ both have length asymptotic to $\log n$ in this regime, so we do not have to \textit{look for} atypical paths.) It also yields an alternative proof for the phase transition studied in~\cite{CaRaReZa22}, which we will comment on in Section~\ref{sec:rrt}.

First-moment arguments also show that the exponent in~\ref{it:Bell-lower} is optimal, since the number of vertices reachable from $1$   is at most $e^{pn(1+\eps/2)}$.

\subsection{An alternative model}
	\label{sec:models}
	As mentioned above, related models with repeated edges have been studied since the 1970s. For instance, the following model, sometimes referred to as a \textit{random gossipping protocol}, is considered in~\cite{bs79,ditmarsch17,haigh81,moon72,msra16,msa20}. For a given $m$, let $(e_1, \ldots, e_m)$ be a sequence of independent uniformly random edges of the complete graph (so repetitions are allowed), and let $H_m$ be a random temporal graph with ordered edges $e_1, \ldots, e_m$. Increasing paths in $H_m$ are defined as before. It is not difficult to see that all our results translate to the model $H_m$, with $m \sim pn^2/2$.
	
	Indeed, from the graph $H_m$, one can construct a simple
        temporal graph $\wt H_m$ by discarding the repeated edges,
        and with high probability, the simple graph $\wt H_m$
        contains the random simple temporal graph $G_p(W)$ with $p =
        2m/n^2(1-o(1))$ (with a natural definition of
        containment). Hence, any increasing path in $G_p(W)$ is also contained in $\wt H_m$ and hence in $H_m$.
	Conversely, all of our first-moment arguments for non-existence of increasing paths can be adapted to the model $H_m$.
	
\subsection{Techniques and plan of the paper}
\label{sec:techniques_plan}

In Section~\ref{sec:first_moment}, we present the first moment bounds for the longest and shortest paths. These prove the upper bound on the longest paths in Theorem~\ref{thm:longestpaths_log} (\ref{it:longest-12})--(\ref{it:longst-ij}) as well as the lower bound on the shortest typical path in Theorem~\ref{thm:shortestpaths}, part~(\ref{it:shortest-12}).

In order to prove
Theorem~\ref{thm:longestpaths_log}~(\ref{it:longest-1j}) for $c<1$, we
show by coupling that we may embed a \textit{random recursive tree} of
size almost $n$ in the graph. A random recursive tree is by now a
well-understood object, obtained by repeatedly attaching a leaf to a
random vertex of the existing tree. The height of such a tree of size
$m$ is known to be asymptotic to $e \log m$
\cite{Devroye1987,Pittel1994}. This construcive method is also useful
for Theorem~\ref{thm:reachable-from-1}. For $c \geq 1$, we partition
the \textit{probability range} into a bounded number of intervals of
length at most $\log n / n$, and show that we may \textit{compose} the
paths found in these intervals. Note that $\max_{j}L(1,j) \sim_p epn$
is linear in $p$, so the \textit{path composition} strategy indeed
yields optimal results. These results are proved in
Section~\ref{sec:rrt}.

Section~\ref{sec:longest_second-moment} is devoted to the lower bounds
for the maximum length of an increasing path
(i.e., part (\ref{it:longst-ij}) of Theorem~\ref{thm:longestpaths_log}). The intuition
for $\max_{i, j}L(i,j) \sim_p \alpha(c) \log n$ can also be derived
from the random-recursive-tree argument, at least for $c \leq
1$. Namely, $\alpha(c) \log n$ is the maximal height over $n$
independently sampled random recursive trees, which in our graph
correspond to exploration processes initiated from the $n$ vertices. Certainly, this
heuristic cannot be used to prove that
$\max_{i, j}L(i,j) \sim_p \alpha(c) \log n$, since the exploration
processes from the different vertices are correlated.

Instead, we resort to the second moment method. However, the second
moment of the \textit{obvious} random variable -- the number of paths
of length roughly $\alpha \log n$ -- is simply too large. Therefore,
we need to restrict the set of paths we are counting in a way that
deterministically avoids certain undesired intersecting pairs of
paths, and still has a sufficiently large expectation. This utilises
some of the ideas and tools from~\citet{abl10}.

The proof of the upper bound on $\ell(1, 2)$, that is, the length of the shortest path between two typical vertices is found in Section~\ref{sec:shortest}. It is based on a branching process argument. Roughly speaking, we use a branching process to find paths from vertex 1 which are (atypically) short. We show that such short paths can reach almost all other vertices. 
The same argument also proves that there exists an increasing path between $1$ and $2$ whose length is close to $\beta(c) \log n$, thereby proving the lower bound of Theorem~\ref{thm:longestpaths_log}~(\ref{it:longest-12}).

Finally, let us give some intuition regarding the asymptotics for
$\max\{\ell(1,i): i\in [n]\}$ and $\max\{\ell(i,j): i,j\in [n]\}$. For
$\max\{\ell(1,i): i\in [n]\}$, note that typically, $G_p(W)$ contains
a vertex $i$ with no incident edges whose labels lie in the interval
$(0, (1-\eps) \log n / n)$. Thus all increasing paths starting at $i$
will use only edges with labels in an interval of length roughly
$(c-1) \log n / n$, which suggests that the shortest path from 1 should be of
length $\gamma(c-1) \log n$ (recalling the definition of
$\gamma$ in \eqref{eq:gamma}). This lower bound on
$\max\{\ell(1,i): i\in [n]\}$ is proved in
Section~\ref{sec:shortest}. This intuition turns out to give the
correct estimate, and the upper bound is proved using the fact that
any vertex \textit{attaches} (in a single step) to a \textit{short}
increasing path with edge labels in a restricted probability range
$(\log n / n, c \log n / n)$. Similar arguments yield the asymptotics
for $\max\{\ell(i,j): i,j\in [n]\}$.


\section{First moment bounds: longest and shortest paths}
	\label{sec:first_moment}

In this section, we present the first moments bounds for the lengths of longest and shortest paths. They are based on elementary counting arguments. 

\subsection{First moment bound: longest paths}
\label{sec:first_moment-longest}

When $np/\log n\to \infty$, the upper bound for $\max\{L(i,j): 1\le i,j\le n\}$ is established in \cite{AnFeSuTa20}, which also implies the upper bounds for $\max\{L(1,i): 1\le i\le n\}$ and $L(1,2)$ as well. Thus we focus on the range where $np \sim c \log n$ for some $c>0$ and prove the upper bounds in Theorem~\ref{thm:longestpaths_log}.

\begin{proof} [Proof of the upper bounds for Theorem~\ref{thm:longestpaths_log}]
For $k\in [n-1]$, let $X_k$ be the number of increasing
  paths of length $k$ in $G$. Similarly, let $Y_k$ denote the number of
  increasing paths of length $k$ starting at vertex 1,
and let $Z_k$ be the number of increasing paths of length $k$ from vertex $1$ to vertex $2$.
Clearly,
\begin{align}\label{eq:bounds_counts}
\EXP X_k &= \binom{n}{k+1}(k+1)! \frac{p^k}{k!}~, \notag\\ 
 \EXP Y_k &= \binom{n-1}{k}k! \frac{p^k}{k!}~, \notag\\ 
 \EXP Z_k &= \binom{n-2}{k-1}(k-1)! \frac{p^k}{k!}~.
\end{align}  
To prove part 
(\ref{it:longst-ij}), observe that, if there is an increasing path of length $\ell$, then there is also an increasing path of length $s$, for any $1\le s\le \ell$. As a consequence, by Markov's inequality,
\begin{equation}
\label{eq:firstmoment_Lij}
  \PROB\Big( \max_{i,j\in [n]} L(i,j)\ge \ell \Big) 
  = \PROB(X_\ell \ge 1 )
  \le \EXP [X_\ell]~\,.
\end{equation}
Therefore, in order to establish an upper bound for $\max\{L(i,j): i,j\in [n]\}$, it suffices to obtain an upper bound on $\EXP[X_k]$. By Stirling's formula, for any natural number $k\ge 0$, 
\begin{equation}
\label{eq:expected}
  \EXP [X_k ]\le n \frac{(np)^k}{k!} \le
  n \left(\frac{nep}{k}\right)^k~.
\end{equation}
Setting $\ell=\lceil a \log n\rceil $ for some $a>\alpha(c)$, \eqref{eq:expected} implies that, for any $\epsilon>0$ and all $n$ large enough, 
\[
\EXP[X_\ell] 
\le  n \left(\frac{ec(1+\epsilon)}{a}\right)^{\ell}
    \le n^{1- a \log(a/(ec(1+\epsilon)))} \to 0~,
\]
provided $\epsilon>0$ is small enough: indeed, by definition of $\alpha(c)$ we have $1-a \log(a/(ec)<0$, and by continuity the same holds for the exponent in the right-hand side above for all $\epsilon>0$ small enough. (We remark that $a > ec$.)

For part
(\ref{it:longest-1j}), focusing now on $\max\{L(1,i): i\in [n]\}$, we have a relation similar to the one in \eqref{eq:firstmoment_Lij}:
\[\PROB\Big(\max_{i\in [n]} L(1,i) \ge \ell \Big) \le \PROB(Y_\ell \ge 1) \le \EXP[Y_\ell]\,.\]
However, for any $p\in [0,1]$ and $\ell = \lceil (1+\epsilon)enp \rceil$,
\[
	\label{eq:Yell-bound}
	\EXP[Y_\ell] \le \left(\frac{nep}{\ell} \right)^\ell \le (1+\epsilon)^{-\ell} \leq e^{ -\epsilon enp}\,,
\]
which tends to zero as soon as $np\to \infty$.

To prove part (\ref{it:longest-12}), we finally consider $L(1,2)$. Suppose first that $c<1$. 
Let $k$ be an arbitrary positive integer, and define $\beta_n=  k/\log
n$.
Then
\begin{equation}
\label{eq:nopath}
  \EXP Z_k \le  \frac{1}{n} \left(\frac{nep}{k}\right)^k
  =  \frac{1}{n} \left(\frac{ce}{\beta_n}\right)^{\beta_n\log n}
  = n^{-\beta_n\log(\beta_n/(ec))-1} \le n^{c-1}~,
\end{equation}
since $\inf\{x\log(x/(ec)) -1: x>0\}=c-1$.
Hence, for $c<1$, we have
\begin{align*}
    \PROB\{ L(1,2) \ge 1\} 
    &\le \PROB( Z_k>0 \ \text{for some} \ k \ge 1)\\
  	&\le \PROB( Z_k>0 \ \text{for some} \ k \in  \{1,\ldots, 3c\log n\}  )\\
   	&\qquad + \PROB( Y_k>0 \ \text{for some} \ k \ge 3c\log n  )~.
\end{align*}
We have already shown that the second term on the right-hand side converges
to zero (since $e<3$), and by \eqref{eq:nopath}, the first term is at most $(3c\log
n) n^{c-1} \to 0$. Hence, we conclude that $\PROB\{L(1,2) =0\} \to 1$
whenever $p\le c\log n$ for some $c<1$.

Suppose now that $c\ge 1$, and set $\ell=\lceil (1+\epsilon) \beta(c)\log n \rceil$. 
By an analogous calculation we then have $\sum_{k\ge \ell} \EXP[Z_k] = \exp(-\Omega(\eps np))$, which completes the proof of the upper bounds in Theorem~\ref{thm:longestpaths_log}.
 \end{proof}

\subsection{First moment bound: shortest path}
\label{sec:first_moment-shortest}
We will now prove a lower bound on $\ell(1,2)$. The remaining statements of Theorem~\ref{thm:shortestpaths} are proved in Section~\ref{sec:shortest}.

\begin{proof} [Proof of the lower bound for Theorem~\ref{thm:shortestpaths}~\ref{it:shortest-12}] that is,~we will consider the length of the \emph{shortest} increasing path between
two typical vertices, denoted $\ell(1,2)$. We have already seen that when $p=c\log n/n$ for some $c<1$,
with high probability, there are no increasing paths starting at vertex $1$ and ending at vertex $2$ (Theorem~\ref{thm:longestpaths_log}~(\ref{it:longest-12})). 

  Recall that $Z_k$ denotes the number of increasing paths between vertices $1$ and $2$.
  For any given $\ell \in \N$,
  \begin{align*}
    \PROB ( \ell(1,2) \le \ell )
    & = 
          \PROB \bigg( \sum_{k=1}^\ell Z_k \ge 1 \bigg)  
    \le 
            \sum_{k=1}^\ell\EXP Z_k\,.
   \end{align*}
   It follows from the expression in \eqref{eq:bounds_counts} and Stirling's formula, that
   \begin{align*}
    \PROB ( \ell(1,2) \le \ell )& \le \sum_{k=1}^\ell \frac{1}{n} \left(\frac{nep}{k}\right)^k
            = \frac{1}{n} \sum_{k=1}^\ell \left(\frac{ce\log n}{k}\right)^k~.
  \end{align*}
  Let $\epsilon>0$ be arbitrary. If $\ell \le (1-\epsilon) \gamma(c) \log n$,
  then each term of the sum in the right-hand side above may be bounded by
\[
  \left(\frac{ce\log n}{k}\right)^k \le  \left(\frac{ce}{(1-\epsilon)\gamma(c)}\right)^{(1-\epsilon) \gamma(c) \log n}
  = n^{(1-\epsilon) \gamma(c) \log \frac{ce}{(1-\epsilon)\gamma(c)}}~.
\]
     (To see this, note that the left-hand side is an increasing function of $k$ for $k\le \gamma(c)\log n$.)
     Since the function $x\log (ec/x)$ is strictly convex and increasing for $x\le \gamma(c)$, there exists a positive number $\phi(\epsilon)$
     such that
  $(1-\epsilon) \gamma(c) \log \frac{ce}{(1-\epsilon)\gamma(c)} \le \gamma(c) \log \frac{ce}{\gamma(c)} -\phi(\epsilon) = 1-\phi(\epsilon)$.
       Therefore, we have that
       \[
   \PROB( \ell(1,2) \le (1-\epsilon) \gamma(c) \log n )
   \le \gamma(c) n^{-\phi(\epsilon)} \log n  \to 0~,
 \]
 proving that, for every $\epsilon>0$, with high probability,
$ \ell(1,2) \ge (1-\epsilon) \gamma(c) \log n$.
\end{proof}


\def \Pc {\mathcal{P}}
\section{Longest increasing path: the lower bound}
\label{sec:longest_second-moment}
	The calculation from the previous section shows that the expected number of increasing paths of length $\alpha(c)\log n (1-o(1))$ tends to infinity, and a natural strategy for proving that such paths are indeed likely to exist is the second moment method. However, it turns out that for a fixed path $P$, the expected number of increasing paths $Q$ intersecting $P$ in, say, one short segment, is too large, so we have to exclude such paths deterministically, which is the aim of the following subsection.
		\subsection{Paths with few intersections}
We will now restrict our attention to a collection of
                paths of length $k$ which does not contain any pairs
                of paths $(P, Q)$ intersecting in \textit{undesired}
                ways. For instance, let us give some intuition of how
                a long segment of $Q \setminus P$ can be
                avoided. Given a path $P$ of length
                $k \sim \alpha(c) \log n$, the edge labels on it increase
                significantly more slowly than on a typical
                path. Hence, if we can enforce the property that all
                the vertices on $P$ are \textit{typical}, in the sense
                that the paths leaving them increase and decrease at
                the rate at most $1/(en)$, then this will ensure that
                all the paths \textit{leaving} $P$ are in fact shorter
                than the corresponding segment of $P$. The majority of
                the work in this section consists in showing that restricting
                our collection of paths does not significantly
                decrease the expected number of such paths.
	
	Throughout the section, we work with the model $G_p(W)$ whose labels are i.i.d.\ exponential random variables $(W_e: e \in K_n)$. Recall that by definition of $\alpha = \alpha(c)$, we have $\alpha \log (\alpha/c)- \alpha-1 =0$. Given $\eps >0$ and $p = c \log n / n$, we fix $k  = k_{c, \eps}= \lfloor (1-\eps)\alpha(c) \log n\rfloor$. We say that a labelled path $P= (v_1, \ldots, v_{k+1})$ with edge labels $w_1\leq w_2 \leq \cdots \le w_k$ is \emph{$C$-legal} if for $i = 1, \ldots, k$
	\begin{equation}
		\label{eq:legal-1}
		\left| \frac {k}{w_k}\cdot w_i-i \right| \leq C \sqrt{2i \log \log i}~, \quad \text{  and}
	\end{equation}
	\begin{equation}
		\label{eq:legal-2}
		\left| \frac {k}{w_k}\cdot(w_k-w_{k-i+1}) -i \right| \leq C \sqrt{2i \log \log i}~.
	\end{equation}
	This definition is identical to the one in~\citet{abl10}, and the fact that for some $C$, a path of length $k$ is $C$-legal with positive probability follow from the argument of that paper. In~\cite{abl10} it was also shown that if $P$ is $C$-legal, then for any $1\leq i < j\leq k$,
	$$
	\left|\frac{k}{w_k}(w_j-w_i)- (j-i) \right| \leq 4C \sqrt{k \log \log k}~, 
	$$
	or, more conveniently,
	\begin{equation}
		~\label{eq:subpaths}
		w_j-w_i \in \frac{w_k}{k}(j-i) \left(1 \pm 4{(j-i)}^{-1}C \sqrt{k \log \log k} \right)~.
	\end{equation}
	For convenience, we set $w_0$=0, so that~\eqref{eq:subpaths} also holds for $i=0$.


	For the vertex $v_i$ and $w>0$, let $\text{{\em Inc}}(v_i, w)$ be the set of increasing paths from $v_i$ with labels in $[w_{i-1}, w_{i-1}+w]$. If $w=-\log(1-p)-w_{i-1}$ (the maximal possible value), we simply write $\text{{\em Inc}}(v_i)$. Similarly, let $\text{{\em Dec}}(v_i, w)$ be the set of decreasing paths from $v_i$ with labels in $[w_{i}-w, w_i]$. A vertex $v_i$  is called \emph{$(i, \beta)$-typical} if the following hold.
	\begin{enumerate}
		\item[(B1)] For $w\leq \log^{2/3} n/n$, any path in $\text{{\em Inc}}(v_i, w) \cup \text{{\em Dec}}(v_i, w)$ has length at most $2e n\log^{2/3} n$, and
		\item[(B2)] for $w \geq \log^{2/3} n/(2n)$, any path in $\text{{\em Inc}}(v_i, w) \cup \text{{\em Dec}}(v_i, w)$ has length at most $(1+\beta)ewn $.
	\end{enumerate}
	$P$ is called \textit{$\beta$-typical} if each vertex $v_s$ on $P$ is $(s, \beta)$-typical.

	We show that any increasing path $P$ of length $k$ is $C$-legal and $\beta$-typical (for appropriate $C, \beta >0$) with positive probability. More formally, we first expose the ordering of the labels on $P$, and condition on the event that $P$ is increasing. Then we expose the labels themselves, and bound the probability that $P$ is $C$-legal. This follows directly from the arguments of~\cite{abl10}. Finally, we expose the paths starting at $v_1, \ldots, v_{k+1}$ (disjoint from $P$), which are independent of the edges of $P$.
	\begin{lemma}
		\label{l:bonsai-likely}
		Let $p= c \log n / n$  and let  $P$ be a path of length $k \geq (\alpha(c)+ec)\log n/2$. First, for some $C$, $P$ is $C$-legal with probability at least $3/4$.
		Second, for any $\beta >0$, conditionally on being increasing and $C$-legal, $P$ is $\beta$-typical with probability at least $1 - e^{-\Omega(\beta\log^{2/3} n)}$.
	\end{lemma}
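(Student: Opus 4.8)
The plan is to run the three‑stage revelation of the labels sketched just before the statement. First I would expose only the relative order of the $k$ labels on $P$ and condition on $P$ being increasing; then expose the values $w_1\le\cdots\le w_k$ themselves; and finally expose everything off $P$. Conditionally on ``$P$ increasing'', $(w_1,\dots,w_k)$ is the vector of order statistics of $k$ i.i.d.\ $\Exp(1)$ variables, so after dividing by $w_k$ the ratios $(w_1/w_k,\dots,w_{k-1}/w_k)$ are (closely related to) the order statistics of $k-1$ uniforms on $[0,1]$, and \eqref{eq:legal-1}--\eqref{eq:legal-2} are exactly a law‑of‑iterated‑logarithm‑type two‑sided control of these ordered values (equivalently, of the partial sums appearing in the standard representation of exponential order statistics). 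The statement that, for a suitable $C$, they hold for all $i$ simultaneously with probability $\ge 3/4$ is precisely what~\citet{abl10} establish --- the hypothesis $k\ge(\alpha(c)+ec)\log n/2$ serving only to guarantee $k\to\infty$ --- so I would import it verbatim, together with the consequence~\eqref{eq:subpaths}.

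For $\beta$‑typicality I would condition on the whole label configuration on $E(P)$; this fixes $w_0=0,w_1,\dots,w_k$, hence all the windows $[w_{i-1},w_{i-1}+w]$ and $[w_i-w,w_i]$, and it determines the events ``$P$ increasing'' and ``$P$ is $C$‑legal''. The paths counted in $\Inc(v_i,w)$ and $\Dec(v_i,w)$ are, by the exposure scheme, edge‑disjoint from $P$, so conditionally on the labels of $E(P)$ they are governed by i.i.d.\ $\Exp(1)$ labels; moreover the bound I derive for $\PROB(P\text{ not }\beta\text{-typical})$ will not depend on the realised $w_i$'s, hence it will also bound the conditional probability given ``$P$ increasing and $C$‑legal''. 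Property (B1) is then immediate for $n$ large, since every path in $G_p$ has fewer than $n\le 2en\log^{2/3}n$ edges (and, if one wanted the sharper constant, the first‑moment estimate below applied to the single window of width $\log^{2/3}n/n$ would give it). For (B2) I would discretise the window width: set $w_0^\star=\log^{2/3}n/(2n)$, $w_m^\star=(1+\beta/4)^m w_0^\star$, and let $M$ be least with $w_M^\star\ge\tau$, so $M=O(\beta^{-1}\log\log n)$. Since $w\mapsto\Inc(v_s,w)$ is monotone, it suffices to show that for every vertex $v_s$ on $P$ and every $m\le M$ no path in $\Inc(v_s,w_m^\star)\cup\Dec(v_s,w_m^\star)$ has length above $(1+\beta/2)enw_m^\star$; for $w\in[w_{m-1}^\star,w_m^\star]$ and $\beta\le 2$ this then upgrades to the bound $(1+\beta)enw$ demanded by (B2).

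The single‑scale estimate is a first moment: the expected number of increasing, $P$‑avoiding paths from $v_s$ of length $\ell$ with all labels in a fixed interval of width $w$ is at most $n^\ell w^\ell/\ell!\le(enw/\ell)^\ell$ (at most $n^\ell$ candidate paths; each of the $\ell$ distinct edge labels lands in the interval with probability $\le w$, independently; given that, they increase along the path with probability $1/\ell!$). Summing over $\ell\ge\lceil(1+\beta/2)enw\rceil$ is a geometric series, so Markov bounds the bad event at a scale by $e^{-\Omega(\beta n w)}\le e^{-\Omega(\beta\log^{2/3}n)}$, using $w_m^\star\ge\log^{2/3}n/(2n)$, and likewise for $\Dec$. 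Taking a union bound over the $M=O(\beta^{-1}\log\log n)$ scales, over $\Inc$ and $\Dec$, and over the $k+1=O(\log n)$ vertices of $P$ costs only a polylogarithmic factor, which is absorbed into the exponent, and yields $\PROB(P\text{ not }\beta\text{-typical}\mid P\text{ increasing},\ C\text{-legal})\le e^{-\Omega(\beta\log^{2/3}n)}$.

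I expect the only genuinely delicate point to be the \emph{uniformity over the window width} in (B2): the net must have ratio $1+\Theta(\beta)$ so that moving from a net point to a nearby $w$ costs merely a $1+O(\beta)$ factor, while simultaneously its cardinality, $O(\beta^{-1}\log\log n)$, must stay small enough to be swamped by the $e^{-\Omega(\beta\log^{2/3}n)}$ tail --- reconciling these is what forces the exponent $2/3$ appearing in (B1)--(B2). A subsidiary point to keep in mind is that $\Inc(v_s,w)$, $\Dec(v_s,w)$ must be taken edge‑disjoint from $P$, which is exactly what makes conditioning on the labels of $E(P)$ leave the relevant labels i.i.d.\ exponential; everything else (Stirling, geometric summation) is routine, and the $C$‑legal half is a black box from~\citet{abl10}.
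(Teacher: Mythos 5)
Your proposal is correct and takes essentially the same route as the paper: the $C$-legal part is quoted from \citet{abl10} after conditioning on the ordering of the labels on $P$, and $\beta$-typicality is proved by the first-moment bound $(enw/\ell)^\ell$ for $P$-avoiding paths in a window of width $w$, followed by a union bound over the $O(\log n)$ vertices of $P$ and a discretised family of window widths. The only cosmetic difference is your geometric net of widths with ratio $1+\Theta(\beta)$ versus the paper's discretisation over integer values of $wn$; also note that the bound $2en\log^{2/3}n$ in (B1) is a typo for $2e\log^{2/3}n$, which is exactly what your parenthetical single-window first-moment estimate proves and what is used later in Lemma~\ref{l:optimal}.
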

	\begin{proof}
		The first statement follows from~\cite{abl10}. Namely, let $T_{1}, \ldots, T_{k} \sim \Exp(1)$ be iid random variables. Then, by the memoryless property of the exponential distribution, if we condition on the event that $P$ is increasing, then the $i$-th label $w_i$ on $P$ has the same distribution as $T_1 + \dots + T_i$. Hence, by Corollary 12 from~\cite{abl10}, the probability that $P$ is $C$-legal is at least $3/4$.
		
		For the second statement, fix a vertex $v_i$ on $P$ and $w \in (0,p)$. Note that for any edge $f$, and $x, w < 1/2$,
		$$\PROB \left[ W_f \in [x, x+w] = e^{-x}-e^{-x - w} \right]=w(1+O(x+w))~.$$
		Hence, the number of paths  in $\text{{\em Inc}}(v_i, w)$ of length $k$ has the same distribution as the variable $Y_k$ defined in Section~\ref{sec:first_moment}, after replacing $p$ by $w(1+O(\log n/n))$. The same holds for $\text{{\em Dec}}(v_i, w)$.
		For (B1), let $w = \log^{2/3} n /n$, and recalling~\eqref{eq:Yell-bound}, the expected number of paths in $\Inc(v_i, w) \cup \Dec(v_i, w)$ of length at least $2ewn$ is at most
		$$\left(\frac{ewn(1+O(w))}{2ewn} \right)^{2ewn} \leq 2^{-\log^{2/3} n}~.$$
		
		For (B2), note that it suffices to impose the condition with integer values for $w {n}$ (and $\beta$ replaced by $\beta/2$), because the bound for other values follows.  Hence, recalling~\eqref{eq:Yell-bound}, the expected number of paths at $v_i$ violating (B2) is at most $e^{-\Omega(\beta wn)} = e^{-\Omega(\beta \log^{2/3} n)}.$
		Summing over all values of $w {n}$ and $v_i$ (with $O(\log^2 n)$ many options), we have that the expected number of paths violating (B1) or (B2) is $e^{-\Omega(\beta \log^{2/3} n)}$, so the probability that $P$ is not $\beta$-typical is at most $e^{-\Omega(\beta \log^{2/3} n)}$.
	\end{proof}
	
	Let $\Lambda(P)$ be the collection of paths $Q$ of length $k$ intersecting $P$ in at most $2 \alpha + 2$ connected components with $|P \setminus Q| \geq \log^{3/4}n$. We show that $\Lambda(P)$ contains no other $C$-legal and $\beta$-typical paths, which will be crucial for controlling the second moment of the number of desired paths. The main idea is similar to that in~\cite{abl10}: for any path $Q \in \Lambda(P))$, the labels on $Q \setminus P$ are increasing at the rate at most $1/(en)$, which is strictly higher than the rate of increase on $P$, namely $\frac{c}{\alpha(c)n}$.
	
	\begin{lemma}
		\label{l:optimal}
		 Let $n$ be sufficiently large and $k \geq (\alpha-\eps) \log n$ for some $\eps >0$. Let $P$ be a $C$-legal $\beta$-typical path of length $k$ for $\beta=o(1)$ and an  arbitrary constant $C$. The collection $\Lambda(P)$ contains no other paths which are $C$-legal and $\beta$-typical.
	\end{lemma}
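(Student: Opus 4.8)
The plan is to argue by contradiction: suppose $Q \in \Lambda(P)$ is distinct from $P$ and is also $C$-legal and $\beta$-typical, and derive a contradiction from comparing the rate of increase of labels along $P$ versus along $Q$. Write $Q = (u_1, \ldots, u_{k+1})$ with increasing labels $w_1' \leq \cdots \leq w_k'$, and similarly $w_1 \leq \cdots \leq w_k$ for $P$. Since $P$ is $C$-legal, by \eqref{eq:subpaths} the labels on $P$ grow at rate essentially $w_k/k$, and since $w_k \sim \tau = -\log(1-p) = (1+o(1)) c\log n/n$ while $k = \lfloor(1-\eps)\alpha(c)\log n\rfloor$, this rate is $(1+o(1))\tfrac{c}{(1-\eps)\alpha(c)}\cdot\tfrac{1}{n}$; crucially, since $\alpha(c) > ec$, for $\eps$ small this rate is strictly less than $\tfrac{1}{en}$. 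The same holds for $Q$. The key tension: $Q$ visits $u_j, u_{j+1}, \ldots$ lying on a maximal segment of $Q \setminus P$ of length $\ell \geq \log^{3/4} n / (2\alpha+2)$ (such a segment exists because $|P \setminus Q| \geq \log^{3/4}n$ and $Q$ meets $P$ in at most $2\alpha+2$ components, so by symmetry $|Q \setminus P| \geq \log^{3/4}n$ and is covered by $\leq 2\alpha+2$ arcs). The endpoints of this segment sit on $P$ (or are the endpoints of $Q$), so I can bound the label-span of this segment using the $C$-legality of $P$: it is at most $(1+o(1))\tfrac{w_k}{k}\ell$, i.e. at most $(1+o(1))\tfrac{c}{(1-\eps)\alpha(c)}\cdot\tfrac{\ell}{n}$.

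On the other hand, the segment of $Q \setminus P$ hangs off a vertex $v_s$ of $P$ — it is (a subpath of) a path in $\Inc(v_s, w)$ or $\Dec(v_s, w)$ for the corresponding window width $w$ — and since $v_s$ is $(s,\beta)$-typical, condition (B2) forces any such path of length $\ell$ to have label-window $w \geq \tfrac{\ell}{(1+\beta)en}$ (for $\ell \geq \log^{3/4}n$, which comfortably exceeds the $\log^{2/3}n/(2n)$ threshold in (B2), since $wn \geq \ell/((1+\beta)e) \gg \log^{2/3}n$). So the label-span of the segment is at least $(1-o(1))\tfrac{\ell}{en}$. Comparing the two bounds: $(1-o(1))\tfrac{\ell}{en} \leq \text{span} \leq (1+o(1))\tfrac{c}{(1-\eps)\alpha(c)}\cdot\tfrac{\ell}{n}$, which after dividing by $\ell/n$ gives $\tfrac{1}{e}(1-o(1)) \leq \tfrac{c}{(1-\eps)\alpha(c)}(1+o(1))$. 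Since $\alpha(c) = ec \cdot \exp(1/\alpha(c)) > ec$ strictly, the right-hand side is bounded away from $1/e$ from below (for $\eps$ small and $n$ large), contradiction. One must handle a boundary subtlety: if the segment is an initial or terminal piece of $Q$ (not flanked on both sides by $P$), only one endpoint lies on $P$; but then I use that the other endpoint's label is within $[0, w_k]$ anyway, or simply that $w$ is at most the full window $\tau$, and the same rate comparison goes through since $\tau/k$ is still the relevant rate.

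The main obstacle I anticipate is bookkeeping the windows correctly: the segment of $Q\setminus P$ leaving $v_s$ may be traversed by $Q$ in the increasing or the decreasing direction relative to $v_s$'s position, and its label interval must be correctly located inside $[w_{s-1}, w_s]$-type windows so that (B1)/(B2) apply with the right $w$. I also need $\ell$ large enough to be in the (B2) regime rather than the weak (B1) regime — this is why the definition of $\Lambda(P)$ asks for $|P\setminus Q| \geq \log^{3/4}n$ together with at most $2\alpha+2$ components, guaranteeing a single long arc of length $\gg \log^{2/3}n$. A secondary point is that the segment's endpoints, being vertices of $P$, have indices $i<j$ on $P$ with $j-i$ possibly much larger than $\ell$ (the arc of $Q$ is a shortcut), but that only makes the $P$-side span bound $\tfrac{w_k}{k}(j-i)$ larger, which is the wrong direction; instead I bound the span directly by $w_k \leq \tau$ when $j-i$ is large, and note $\tau = (1+o(1))\tfrac{c\log n}{n}$ while $\tfrac{\ell}{en} \geq \tfrac{\log^{3/4}n}{(2\alpha+2)en}$, so for the span to be $\leq \tau$ we'd still need... — actually the clean route is: the arc's endpoints on $P$ determine $w_i^P, w_j^P$ with $w_j^P - w_i^P$ equal to the arc's label span, and this difference is at most $w_k \leq \tau$, but also the arc length $\ell$ satisfies $\ell/((1+\beta)e) \leq wn \leq \tau n = (1+o(1))c\log n$, forcing $\ell \leq (1+o(1))ec\log n < (1-\eps)\alpha(c)\log n = k$ — wait, that bounds $\ell$ but I want a contradiction. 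The genuinely clean argument, which I will carry out, keeps the pair $(i,j)$ with $j - i \leq \ell + (2\alpha+1)\cdot(\text{other components})$... I will instead sum over all $\leq 2\alpha+2$ arcs of $Q\setminus P$ and use that their total length is $|Q\setminus P| \geq \log^{3/4}n$ while their total $P$-side index-span is at most $k$, so some arc has length-to-span ratio at least... — the robust formulation is: $|Q \setminus P| = \sum(\text{arc lengths}) \geq \tfrac{en}{1+\beta}\sum(\text{arc spans}) \geq \tfrac{en}{1+\beta}\cdot\tfrac{k}{(1+o(1))\tau n/k}\cdot$... I will finalize this global counting in the write-up; the essential content is the rate comparison $\tfrac1e < \tfrac{c}{(1-\eps)\alpha(c)}$ failing.
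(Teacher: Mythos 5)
Your intuition matches the paper's (the rate mismatch $c/\alpha(c)<1/e$, using condition (B2) at the typical vertices of $P$ to bound arcs of $Q\setminus P$, and the existence of a long arc because $|Q\setminus P|=|P\setminus Q|\ge \log^{3/4}n$ is split into at most $2\alpha+2$ pieces), but the proposal has a genuine gap: the single-arc comparison does not yield a contradiction --- as you notice yourself, an arc of $Q\setminus P$ may be a shortcut whose label span is $\frac{w_k}{k}(j-i)$ with $j-i$ much larger than its length $\ell$ --- and the ``global counting'' you defer to the write-up is exactly the nontrivial part and is never carried out (the displayed inequality $\sum(\text{arc lengths})\ge \frac{en}{1+\beta}\sum(\text{arc spans})$ is also in the wrong direction: (B2) upper-bounds length in terms of span). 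What is actually needed is a length balance over corresponding segment pairs: writing $S_1,\dots,S_{j+1}$ for the maximal segments of $P\setminus Q$ and $S_1',\dots,S_{j+1}'$ for the matching segments of $Q\setminus P$ (anchored at the shared components, which occur in the same label order along both paths), one shows $\sum_t\bigl(|S_t|-|S_t'|\bigr)>0$, contradicting $|Q|=k$: $C$-legality of $P$ gives $|S_t|\ge \frac{k}{w_k}w(S_t)(1-o(1))$, typicality gives $|S_t'|\le (1+\beta)e n\, w(S_t)$ when $w(S_t)\ge \log^{2/3}n/n$ and $|S_t'|\le 2e\log^{2/3}n$ otherwise (via (B1)), and at least one segment has weight $\Omega(\log^{3/4}n/n)$, so the gain $\Omega(\log^{3/4}n)$ beats the $O(\log^{2/3}n)$ losses from the short segments.

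Second, and more importantly, your treatment of the terminal piece is incorrect: the final arc of $Q\setminus P$ need not have labels in $[0,w_k]$; it can use the range $(w_k,-\log(1-p)]$, which may exceed $w_k$ by order $\log n/n$, and then the balance argument collapses --- such a $Q$ can genuinely exist in the graph, and it is excluded only because it cannot itself be $C$-legal. The paper resolves this with a symmetrization your proposal lacks: define the weight $w(P)=w_k$ and prove that every $Q\in\Lambda(P)$ with $w(Q)\le w(P)$ must be strictly shorter than $k$; hence $w(Q)>w(P)$ for all $Q\in\Lambda(P)$, and if such a $Q$ were also $C$-legal and $\beta$-typical, exchanging the roles of $P$ and $Q$ gives $w(P)>w(Q)$, a contradiction. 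The hypothesis $w(Q)\le w(P)$ is precisely what caps the window of the terminal arc so that (B1)/(B2) bound its length by that of the corresponding $P$-segment. Without this device, or some other explicit use of $Q$'s own legality to control its labels above $w_k$, the argument does not close.
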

	\begin{proof} 
		Let $P = v_1, \ldots, v_{k+1}$ be an increasing path
                with edge labels $w_1, \ldots, w_k$.  We define the
                weight of the path by  $w(P)=w_k$, that is, the
                largest weight on the edges of the path. It is sufficient to show that if $Q \in \Lambda(P)$, then $w(Q)>w(P)$ (i.e., $P$ is in some sense locally optimal). Indeed, then $P$ is also in $\Lambda(Q)$ by symmetry, and if $Q$ is $C$ legal and $\beta$-typical, then $w(P)>w(Q)$, which is a contradiction.
		
		 For $Q \in \Lambda(P)$, let $S_1, S_2, \ldots, S_{j+1}$ be the (maximal) segments of $P \setminus Q$, and let $S_1', \ldots, S_{j+1}'$ be the corresponding segments of $Q \setminus P$, with $j \leq 2 \alpha +2$.  For convenience, define $w_0 = 0$ and $w_{k+1}=w_k$. Assuming $w(Q) \leq w(P)$,
		we show that
		\begin{equation}
			\label{eq:surplus}
			\sum_{t=1}^{j+1} \left( |S_t|-|S'_{t}|\right) >0~,
		\end{equation}
		so $Q$ cannot have length $k$.
		
		For a segment $S = v_{i}, \ldots, v_{i+s}$, we define its \textit{weight} to be $w(S) = w_{i+s}-w_{i-1}$ (the difference between the labels of the edges preceding and following $S$), and note that $w(P)$ is defined consistently since $w_0 = 0$ and $w_{k+1}=w_k$. 
		For the segments $S_t$ of weight at most $\log^{2/3} n / n$, condition (B1) implies 
		$$|S_t| - |S_t'| \geq - |S_t'| \geq - 2e \log^{2/3}n~,$$
		and note that the condition $w(Q) \leq w_k$ was used for bounding $|S_t'|$ in case $S_t'$ is the final segment of $Q \setminus P$.

		Now consider a segment $S \in \{S_1, \ldots, S_{j+1}\}$ of weight at least $\log^{2/3} n / n$. Using~\eqref{eq:subpaths} and $w_k \leq \frac{c \log n}{n}$, the length of $S$ is at least $$\frac{w(S)k}{w_k}(1-o(1)) \geq \frac{w(S)n(\alpha(c) - 2 \eps)}{c}~.$$
		Moreover, using (B2), the weight of the corresponding segment of $Q$ is at most $enw(S)(1+\beta)$, so we obtain
		\begin{equation}
			\label{eq:long-segment}
			|S|-|S'| \geq w(S) \left(\frac{k}{w_k} - en - 2\beta en \right)
			\geq w(S)n \left( \frac{\alpha-\eps}{c}  - e - 2\beta \right)~.
		\end{equation}
	For sufficiently small positive $\beta$ and $\eps$ and the considered segments $S$, we have $|S|-|S'| \geq 0$.

		Now, at least one segment, say $S_{j+1}$, has to have weight at least $\frac{c \log^{3/4}n}{4\alpha(2\alpha+3)n}$, since otherwise by~\eqref{eq:subpaths},
		$$|S_1| + \dots + |S_{j+1}| \leq \frac{k}{w_k}\left( \frac{c\log^{3/4}n}{4\alpha n} +O(\sqrt{k \log \log k})\right) 
		< \log ^{3/4}n~,$$
		which contradicts the assumption of the lemma. Hence, using~\eqref{eq:long-segment}, $\alpha > ec$, and taking $\eps>0$ and $\beta>0$ sufficiently small,
		$$|S_{j+1}|-|S_{j+1}'| = \Omega(\log^{3/4}n)~.$$
		
		We conclude that
		$$|S_{j+1}|-|S_{j+1}'| + \sum_{t=1}^{j}(|S_t|-|S_t'|) \geq \Omega(\log^{3/4 }n)- 2(2\alpha +2)e \log^{2/3}n >0~,$$
		as required.
	\end{proof}
	We remark that the previous proof indicates why the case where $Q \setminus P$ consists of segments of length $O(1)$ is hard to control by imposing local restrictions on $P$. Instead, this case is covered using the second moment method in Lemma~\ref{l:second-moment} below.
	
	\subsection{The second moment bound}
	Let ${\mathcal S}_{k,i,j}$ denote the set of pairs $(P, Q)$ of paths in $K_n$ of length $k$ such that $|P \cap Q|$ consists of $i$ edges in $j$ components, where the components are considered in the graph $P \cup Q$.
	Let $I(P)$ denote the event that $P$ is an increasing path in $G_p(W)$.
	For \textit{most} values of $i$ and $j$, we prove bounds on $\Delta_{k,i,j}:= \sum_{(P, Q) \in \mathcal{S}_{k,i,j}} \PROB[I(P) \land I(Q)]]$ which are sufficient for an application of Chebyshev's inequality; the remaining cases have been handled in the previous subsection.
	
	We use the following lemma from~\cite{abl10}. (The upper bound can actually be strengthened for our setting, but this strengthening is not needed.)
	\begin{lemma}[Lemma 8 in~\cite{abl10}]\label{l:path-count}For every $n,i,j,k\in \N$, we have $|\mathcal{S}_{k,i,j}| \leq n^{2k+2-i-j}(2k^3)^j.$
	\end{lemma}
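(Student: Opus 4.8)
The plan is to bound $|\mathcal{S}_{k,i,j}|$ by an explicit encoding (over‑counting) argument: I will show that every pair $(P,Q)\in\mathcal{S}_{k,i,j}$ can be reconstructed from a piece of data drawn from a set of size at most $n^{2k+2-i-j}(2k^3)^j$. Since the reconstruction map only has to be \emph{onto} $\mathcal{S}_{k,i,j}$, all distinctness constraints on the decoded objects (that $Q$ really is a path, that its vertices off $P$ are new, etc.) may be dropped, which is what keeps the count clean. No probabilistic input is used; this is purely the content of Lemma 8 of~\cite{abl10}, whose argument I would follow.

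First I would pin down the vertex count. The shared edges of $P\cap Q$ form $j$ vertex‑disjoint paths (the ``shared components'') carrying $i$ edges in total, hence containing $i+j$ vertices; as $P$ and $Q$ each have $k+1$ vertices and meet in exactly these, $P\cup Q$ has $2k+2-i-j$ vertices. (If $i+j>k+1$ then $\mathcal{S}_{k,i,j}=\emptyset$ and there is nothing to prove.)

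For the encoding I would first choose $P$ as an ordered $(k+1)$‑tuple of vertices of $K_n$ — at most $n^{k+1}$ choices — and then describe $Q$ relative to $P$. Traversing $Q$ from one endpoint to the other, its $k$ edges break into maximal runs, exactly $j$ of which are runs of shared edges, and each shared run is a contiguous subpath of $P$ traversed in one of two directions (contiguous because $Q$, having no repeated vertex, cannot use a set of edges of $P$ that is not an interval along $P$). I encode each of the $j$ shared runs by: the subpath of $P$ it uses (specified by the smallest and largest of the corresponding edge‑indices of $P$, at most $k^2$ choices), its orientation ($2$ choices), and the index, within the edge‑sequence of $Q$, of the first edge of the run (at most $k$ choices) — at most $2k^3$ possibilities per run, hence at most $(2k^3)^j$ in all. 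From this data the set of positions of $Q$ lying on shared runs is determined, and it has exactly $i+j$ elements; the remaining $(k+1)-(i+j)$ vertices of $Q$ are then filled in by arbitrary vertices of $K_n$, in at most $n^{(k+1)-(i+j)}$ ways. Multiplying the three factors gives $|\mathcal{S}_{k,i,j}|\le n^{k+1}\cdot(2k^3)^j\cdot n^{(k+1)-(i+j)}=n^{2k+2-i-j}(2k^3)^j$, as required, since every pair in $\mathcal{S}_{k,i,j}$ arises from some choice in this scheme.

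The one delicate point — and the step I would spell out carefully rather than the routine products of $k$'s — is the bookkeeping in the middle step: that the $j$ shared components of $P\cap Q$ are genuinely in bijection with the $j$ shared runs met along $Q$ (so ``in $j$ components'' really counts runs), that consecutive shared runs of $Q$ are vertex‑disjoint (whence the shared part of $Q$ has $i+j$, not fewer, vertices, matching the exponent), and that the recorded subpaths, orientations, and starting indices genuinely locate each shared segment both inside $P$ and inside $Q$. Degenerate configurations (no shared edges, or shared vertices carrying no shared edge) fit the same template with ``subpath'' replaced by ``single vertex'', and are harmless. Everything else is arithmetic.
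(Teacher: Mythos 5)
Your proof is correct. Note that the paper does not prove this statement at all --- it is imported verbatim as Lemma~8 of~\cite{abl10} --- and your encoding argument is essentially the counting proof given in that reference: fix $P$ as an ordered tuple ($n^{k+1}$ choices), locate each of the $j$ shared segments by its endpoints in $P$, its orientation, and its starting position along $Q$ (at most $2k^3$ choices per segment), and fill the remaining $(k+1)-(i+j)$ positions of $Q$ with arbitrary vertices ($n^{k+1-i-j}$ choices), the map being onto $\mathcal{S}_{k,i,j}$. The bookkeeping point you single out is indeed sound: a shared vertex incident to two shared edges has degree $2$ in both paths, so those edges are consecutive in both $P$ and $Q$; hence the maximal shared runs are vertex-disjoint common subpaths, there are exactly $j$ of them, and they carry $i+j$ vertices, which matches the exponent $2k+2-i-j$.
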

	 Recall that by definition of $\alpha = \alpha(c)$, we have $\alpha \log (\alpha/c)- \alpha-1 =0$. Recalling that $\alpha(1) \approx 3.591$, we have the general upper bound
	 $$\alpha(c) \leq \max(3.6, 3.6c)~.$$
	Denote 
	\begin{equation}
		\label{eq:mu}
		\mu_k = \frac{n^{k+1}p^k}{k!}~.
	\end{equation}
	
	The following lemma is applied when $c$ is a constant, although it is stated with slightly weaker assumptions.

	\begin{lemma}
		\label{l:second-moment}
		Let $p = c \log n / n$ with $c \leq n^{1/15}$, and $j\leq i < k \leq \alpha(c) \log n$.
		\begin{compactenum}
			\item \label{it:many-components}
			If $j \geq 2 \alpha + 2$, and $n$ is sufficiently large $\frac{1}{\mu_k^2}\Delta_{k,i,j} \leq n^{-1/4}.$
			\item
			\label{it:large-intersection}
			 If  $j\geq 2$ and $k-i \leq \varphi k$, where $\varphi \log(1/\varphi) < 1/(10ec)$, then $\frac{1}{\mu_k^2}\Delta_{k,i,j} \leq n^{-1/4}.$
			\item \label{it:j1} 
			 If  $k \leq (1- \eps)\alpha \log n $ and $k-i \leq \eps k / 100$ for some $\eps = \eps(n)>0$, then $\frac{1}{\mu_k^2}\Delta_{k,i,1} \leq n^{-\Omega(\eps)}.$			 
		\end{compactenum}
		
	\end{lemma}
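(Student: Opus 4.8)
## Proof Plan for Lemma~\ref{l:second-moment}

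The plan is to estimate $\Delta_{k,i,j}$ by combining the combinatorial count of intersection patterns from Lemma~\ref{l:path-count} with a bound on $\PROB[I(P)\wedge I(Q)]$ for a pair $(P,Q)\in\mathcal{S}_{k,i,j}$, and then to compare the resulting expression with $\mu_k^2$. The first step is to understand $\PROB[I(P)\wedge I(Q)]$: given that $P\cup Q$ has $2k-i$ edges arranged in a known way, the event that \emph{both} $P$ and $Q$ are increasing constrains the relative order of these edges. I expect the clean bound $\PROB[I(P)\wedge I(Q)]\le p^{2k-i}/\bigl((k!)^2 \big/ N(i,j)\bigr)$ for an appropriate combinatorial factor $N(i,j)$ counting the linear extensions compatible with both paths being increasing; concretely, since the $j$ shared components each impose that an interval of the order on $P$ agrees with an interval on $Q$, one gets roughly $\PROB[I(P)\wedge I(Q)]\le \frac{p^{2k-i}}{k!\,(k-i)!}\cdot k^{O(j)}$ or something comparable (this is exactly the type of estimate carried out in~\citet{abl10}, so I would cite or adapt their computation rather than redo it). Multiplying by $|\mathcal{S}_{k,i,j}|\le n^{2k+2-i-j}(2k^3)^j$ and dividing by $\mu_k^2 = n^{2k+2}p^{2k}/(k!)^2$ should collapse most factors of $n^{2k+2}$ and $p^{2k}$, leaving
\[
\frac{1}{\mu_k^2}\Delta_{k,i,j} \;\le\; (2k^3)^j\,(np)^{-i}\, \frac{(k!)}{(k-i)!}\, k^{O(j)} \, n^{-j}\;\le\; \Bigl(\frac{\text{poly}(k)}{n}\Bigr)^{j}\cdot\Bigl(\frac{k}{np}\Bigr)^{i}\cdot(\text{lower order}),
\]
up to the precise exponents, where $np=c\log n$.

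With such an estimate in hand the three parts follow by choosing which factor to exploit. For part~(\ref{it:many-components}), $j\ge 2\alpha+2$ is large: each component contributes a factor $\text{poly}(k)/n = n^{-1+o(1)}$, so $j$ components give $n^{-(2\alpha+2)+o(1)}$, which beats $n^{-1/4}$ with room to spare — here I do not even need to be careful about the $(k/np)^i$ term since $k\le\alpha\log n \le np/(\text{const})$ makes $k/(np)$ bounded, and I can absorb it. For part~(\ref{it:large-intersection}), the relevant regime is $i$ close to $k$ (the paths overlap on almost all edges), so I exploit the factor $\tfrac{k!}{(k-i)!}(np)^{-i}\approx \bigl(k/(e(np))\bigr)^{k}$ in the worst case $i\approx k$; since $np = c\log n$ and $k\le\alpha(c)\log n$, writing $k-i=\varphi' k$ with $\varphi'\le\varphi$ and using $\tfrac{k!}{(k-i)!}\le k^i$ together with the hypothesis $\varphi\log(1/\varphi)<1/(10ec)$, a Stirling estimate should give an exponent of the form $-\Omega(k/\log n)\cdot(\text{positive constant}) = -\Omega(1)\cdot\log n$, hence $\le n^{-1/4}$; the constraint $j\ge 2$ provides an extra $n^{-1+o(1)}$ which gives slack. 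For part~(\ref{it:j1}), $j=1$ kills one power of $n$ only partially (the single component still costs $\text{poly}(k)/n$), and $k-i\le \eps k/100$ is small, so the dominant contribution is $(\text{poly}(k)/n)^{1}\cdot(k/(np))^{i}$ with $i\ge(1-\eps/100)k$; using $k\le(1-\eps)\alpha\log n$ and $np=c\log n$, one has $k/(np)\le(1-\eps)\alpha/c$, and since $\alpha\log(\alpha/(ec))=\alpha+1$, i.e.\ $\alpha/c$ is the point where the relevant exponent is critical, the slack $(1-\eps)$ in the base propagates to an exponent bound $n^{-\Omega(\eps)}$ — this is the same continuity-in-$\eps$ argument used in the first-moment proof of Theorem~\ref{thm:longestpaths_log} and in Lemma~\ref{l:bonsai-likely}.

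The main obstacle I anticipate is \textbf{getting the bound on $\PROB[I(P)\wedge I(Q)]$ right}, in particular tracking how the $j$ shared components and the exact shape of $P\cup Q$ affect the number of jointly-increasing orderings; a naive bound that treats the two paths as independent ($p^{2k-i}$ with both "increasing" costs $1/(k!)^2$) is too lossy when components are few, while being too clever risks an error in the combinatorics. I would handle this by importing the corresponding lemma from~\citet{abl10} essentially verbatim — the setup there (counting increasing paths in a uniformly random edge-ordering) is identical after the memoryless-property reduction noted just before Lemma~\ref{l:bonsai-likely}, so the joint-probability estimate transfers. A secondary nuisance is bookkeeping the polynomial-in-$k$ factors $(2k^3)^j$ and $\tfrac{k!}{(k-i)!}$ uniformly over the allowed ranges of $i,j,c$ (recall $c$ may grow like $n^{1/15}$ in the stated generality), but since all of these are $n^{o(1)}$ under the hypotheses and each part has at least an $n^{-1/4}$ or $n^{-\Omega(\eps)}$ target with a genuine power-of-$n$ gain available, they are absorbed without difficulty.
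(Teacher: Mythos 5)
Your overall frame --- multiply the pattern count of Lemma~\ref{l:path-count} by a bound on $\PROB[I(P)\wedge I(Q)]$ and normalise by $\mu_k^2$ --- is exactly the paper's, and parts~(\ref{it:many-components}) and~(\ref{it:large-intersection}) of your sketch are essentially the paper's computation, modulo bookkeeping you gloss over: in (\ref{it:many-components}) "absorbing" $(k/np)^i$ is not free, it costs $n^{\alpha+1}$ (since $(\alpha/c)^{\alpha\log n}=n^{\alpha\log(\alpha/c)}=n^{\alpha+1}$), which is precisely why $j\ge 2\alpha+2$ is needed; and in (\ref{it:large-intersection}) the bound $k!/(k-i)!\le k^i$ is \emph{not} enough --- you must keep the factor $e^{-i}$ from Stirling and work with $(k/(enp))^i\le n^{1+o(1)}$, because with $k^i$ you are again left with $n^{\alpha+1-o(1)}$, which $n^{-3j/4}$ with $j\ge 2$ cannot beat. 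The margin in (\ref{it:large-intersection}) is only $n^{-1/4}$, not $n^{-\Omega(\log n)}$ as your sketch suggests; the main terms nearly cancel.

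The genuine gap is part~(\ref{it:j1}). The joint-probability bound you plan to import, $\PROB[I(P)\wedge I(Q)]\le \frac{p^{2k-i}}{k!\,(k-i)!}\,k^{O(j)}$, is the same conditional bound used for (\ref{it:many-components})--(\ref{it:large-intersection}), and it provably fails when $j=1$ and $a=k-i\le \eps k/100$: Stirling gives
\[
\frac{1}{\mu_k^2}\Delta_{k,i,1}\;\lesssim\;\frac{2k^3}{n}\Bigl(\frac{k}{enp}\Bigr)^{k-a}\Bigl(\frac{k}{a}\Bigr)^{a}\,,
\]
and since $(k/(enp))^{k}\approx n^{1-O(\eps)}$ by the definition of $\alpha$, this is roughly $n^{1/4}$ (and with your cruder $(k/np)^i\le(\alpha/c)^i$ it is about $n^{\alpha-O(\eps)}$), nowhere near the claimed $n^{-\Omega(\eps)}$. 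The missing idea is that when $Q$ shares a single long segment with $P$, the $a$ new edges of $Q$ must not only be mutually increasing with labels in $[0,p]$; they must fit into the narrow label windows dictated by where they attach to the shared segment (e.g.\ if the overlap is a common tail, all $a$ new labels lie below the label of the first shared edge, which is about $(a/k)w_k$, not $p$). The paper captures this by writing the exact probability $f(i,s_P,s_Q)$ that both paths are increasing as a function of the positions $s_P,s_Q$ of the overlap inside $P$ and $Q$, showing by a ratio argument that $f$ is maximised at $s_P=s_Q=k-i$, and estimating $(k!)^2 f(k-a,a,a)\approx (k/e)^{k-a}4^{a}$ --- a gain of order $(k/(4a))^{a}$ over your bound, which is exactly what makes the exponent negative. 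This position-dependent estimate is the new content of part~(\ref{it:j1}) and is not of the form you intend to cite verbatim, so your proposal does not close this case. (Relatedly, your identity ``$\alpha\log(\alpha/(ec))=\alpha+1$'' is wrong --- it equals $1$, while $\alpha\log(\alpha/c)=\alpha+1$ --- and in this part the exponents cancel to within $O(\eps)$, so precisely this level of care decides the outcome.)
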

	\begin{proof}
	For $P, Q \in \mathcal{S}_{k,i,j}$, the probability of $I(Q)$ given $I(P)$ is at most $\frac{p^{k-i}}{(k-i)!}$, since the $k-i$ edges in $Q \setminus P$ have to have increasing labels along $Q$. Hence, by Lemma~\ref{l:path-count},
	\begin{align}
			\label{eq:Delta-basic}
		\frac{1}{\mu_k^2}\Delta_{k,i,j} &\leq \frac{1}{\mu_k^2}\cdot \frac{n^{2k+2-i-j}(2k^3)^jp^{2k-i}}{k!(k-i)!} 
		= \frac{k!}{(np)^i(k-i)!}\cdot \left(\frac{2k^3}{n} \right)^j \leq \frac{k!n^{-3j/4}}{(np)^i(k-i)!}\,.
	\end{align}
	For $j \geq 2 \alpha(c)+2$, we now have 
		$$\frac{1}{\mu_k^2}\Delta_{k,i,j} \leq \left(\frac{k}{np} \right)^i n^{-3j/4} \leq \left(\frac \alpha c \right) ^{\alpha \log n}n^{-3j/4} = n^{\alpha+1}n^{-3j/4}\leq n^{-j/4}~,$$
		which proves~\ref{it:many-components}.
		
	For~\ref{it:large-intersection}, we consider two cases. First
        assume that $j \geq 2$ and $i \geq i_0 = (1-\varphi)k$ with
        $\varphi < 1/50$ such  that $\varphi \log (1/\varphi)<
        1/(10ec) \leq 1/(5\alpha)$. We start by bounding
        $\frac{k!}{(k-i)!}$.
        By Stirling's formula, for all $n$, 
		$$n \log n - n\leq \log n! \leq n \log n -n + \frac 12 \log n + 10~.$$
	Thus, we have
	\begin{align*}
		\log \frac{k!}{k^i(k-i)!} &\leq k \log k -k + O(\log k) - i \log k - (k-i)\log (k-i)+k-i \\	
			&=(k-i)\log \frac {k}{k-i}-i +O(\log k)			
			\leq -k \varphi \log \varphi -i +O(\log k)
			\leq \frac{k}{4\alpha}- i~.
	\end{align*}
	Hence $\frac{k!}{(k-i)!}\leq \left(\frac ki \right)^i n^{1/4}$, and substituting this into~\eqref{eq:Delta-basic}, we get that for $i \geq (1-\varphi)k$ and $j \geq 2$,
	$$\frac{1}{\mu_k^2}\Delta_{k,i,j} \leq \left(\frac{k}{enp} \right)^i n^{1/4-3j/4} \leq e^{\log n} n^{ 1/4 - 3j/4} \leq n^{-1/4}~,$$
	as required for~\ref{it:large-intersection}.
	
	Finally, we prove~\ref{it:j1}, that is, the case when $j=1$. For this case, we need a more precise estimate for the probability that both $P$ and $Q$ are increasing paths. Fix $P = v_1, \ldots, v_{k+1}$ and $Q = w_1, \dots, w_{k+1}$, and let $s_P \in [1, k-i]$ be the number of edges in the segment of $P\setminus Q$ containing $v_1$, and similar for $Q\setminus P$; thus the segment of $P \setminus Q$ containing $v_{k+1}$ has length $k-i-s_P$, and note that $s_P$ may be $k-i$. The probability that $P$ and $Q$ are both increasing (given that their labels are at most $p$) is exactly
	\begin{align*}
			f(i,s_P, s_Q) &:= \left(\binom{2k-i}{i}\binom{2k-2i}{s_P+s_Q} \ i! \  s_P! \  s_Q!(k-i-s_P)!(k-i-s_Q)!\right)^{-1}\\
			& =   \frac{  (2k-2i-s_P-s_Q)!\ (s_P+s_Q)!}{ (2k-i)!\ s_P! \ s_Q! (k-i-s_P)!(k-i-s_Q)!}~,
	\end{align*}
	which is seen by assigning random labels from $[2k-i]$ to the edges of $P \cup Q$; the first two terms are the probability that $P \cap Q$ and the initial segments get the correct set of labels, and the remaining five terms are the probability of correctly ordering each of the five segments. Note that
	\begin{equation}
		~\label{eq:var-one-overlap}
			\frac{1}{\mu_k^2}\Delta_{k,i,1} \leq  2n^{-1}(np)^{-i}(k!)^2\sum_{0 \leq s_Q \leq s_P}f(k, s_P, s_Q)~,
	\end{equation}
and we now proceed to proving an upper bound for the right-hand side by establishing that $f(i, s_P, s_Q)$ is  maximised when $s_P=s_Q=k-i$. First, assume that $s_P \geq s_Q + 1$ (noting that in this case $k-i-s_Q \geq 1$), and we show that $f(i, s_P, s_Q)/f(i, s_P, s_{Q}+1)\leq 1.$ Indeed, we have
	$$\frac{f(i, s_P, s_Q)}{f(i, s_P, s_Q+1)}
	= \frac{(s_Q+1)(2k-2i-s_P-s_Q)}{(k-i-s_Q)(s_P + s_Q + 1)}
	\leq \frac{(2k-2i-2s_Q-1)(s_Q+1)}{(k-i-s_Q)(2s_Q +2)} \leq 1~. $$
	Hence $f(i, s_P, s_Q)$ is  maximised when $s_P=s_Q$. Second, assume that $s_P \geq k- i - s_P$ (without loss of generality), and we show that $f(i, s_P, s_P)$ is maximised when $s_P = k-i$. We have
	\begin{align*}
		&\frac{f(i, s_P, s_P)}{f(i,s_P+1,s_P+1)} 
		= \frac{(2k-2i-2s_P)(2k-2i-2s_P-1)(s_P+1)^2}{(k-i-s_P)^2 (2s_P+2)(2s_P+1)} \\
		\qquad &=\left(2-\frac{1}{k-i-s_P} \right)\left(2-\frac{1}{s_P+1} \right)^{-1}~,
	\end{align*}
	which is at most 1 since $k-i-s_P \leq s_P+1$.
	
	Finally, we bound $f(i, k-i, k-i)$, which corresponds to the case when $P$ and $Q$ intersect in the final $i$ edges. Substituting $a = k-i $, we have
	\begin{align*}
		&\log ((k!)^2 f(k-a, a, a) )
		 = \log \frac{(k!)^2(2a)!)}{(k+a)!(a!)^2}\\
	& \qquad	\leq 2k \log k - 2k + O(\log k) + 2a \log(2a)-2a - (k+a)\log (k+a) + k+a - 2a \log a + 2a \\
		& \qquad \leq (k-a) \log k -k + a+ 2a \log 2 + O(\log k)
		= (k-a) \log(k/e) + 2a \log 2 + O(\log k)~.
	\end{align*}
	Substituting this bound into~\eqref{eq:var-one-overlap}, recalling that the term $f(k-a, a, a)$ is the largest term,  taking $a \leq k \eps /100$ and using the definition of $\alpha$,
	\[\frac{1}{\mu_k}\Delta_{k, k-a, 1} \leq n^{-1} 2^{2a + O(\log \log n)}  \left(\frac{k}{enp} \right)^k 
	\leq n^{-1} \left(\frac{\alpha e^{-\eps/2}}{e c} \right)^{\alpha \log n} = n^{- \Omega(\eps)}~,\]
	which ends the proof. 
	\end{proof}

 	\subsection{Finding a long path}

Now we are prepared to complete the proof of part \eqref{it:longst-ij}
of Theorem \ref{thm:longestpaths_log}. The upper bound  is shown in Section
\ref{sec:first_moment-longest}, so it remains to prove the existence
of a long path. This follows from the next lemma.

\begin{lemma}
\label{l:longest-path}
Let $p = c \log n / n$ and let $\eps=\eps(n)$ be such that $\eps /
\log^{-1/4}n \to \infty$. If $k = k_{c,\eps} = (\alpha(c)- \eps) \log
n$, then, with high probability, $G_p(W)$ contains an increasing path of length at least $k$.
\end{lemma}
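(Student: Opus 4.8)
The plan is to apply the second moment method to the number $N$ of increasing paths $P$ of length $k$ in $G_p(W)$ that are, in addition, $C$-legal and $\beta$-typical, where $C$ is the constant supplied by Lemma~\ref{l:bonsai-likely} and $\beta=\beta(n)=(\log n)^{-1/3}$ (so $\beta\to0$ while $\beta(\log n)^{2/3}\to\infty$). As the statement only weakens when $\eps$ grows, we may assume $\eps$ lies below a small absolute constant; in particular $k\ge(\alpha(c)+ec)(\log n)/2$, so Lemmas~\ref{l:bonsai-likely} and~\ref{l:optimal} apply. For the first moment, summing $\PROB[I(P)]=p^k/k!$ over all $\binom{n}{k+1}(k+1)!$ vertex-sequences of length $k+1$ gives $\sum_P\PROB[I(P)]=\E X_k=(1-o(1))\mu_k$, with $\mu_k$ as in~\eqref{eq:mu}; by Lemma~\ref{l:bonsai-likely}, conditional on $I(P)$ a path is $C$-legal with probability at least $3/4$ and then $\beta$-typical with probability $1-e^{-\Omega(\beta(\log n)^{2/3})}=1-o(1)$, whence $(3/4-o(1))\mu_k\le\E N\le\mu_k$. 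A Stirling estimate gives $\log\mu_k=\bigl(1-(\alpha-\eps)\log((\alpha-\eps)/ec)\bigr)\log n+O(\log\log n)$, and since $x\mapsto1-x\log(x/ec)$ vanishes at $x=\alpha$ with derivative $-\log(\alpha/c)<-1$, the bracketed coefficient is at least $\tfrac12\eps$; the hypothesis $\eps\gg(\log n)^{-1/4}$ then forces $\log\mu_k\gg(\log n)^{3/4}\to\infty$, so $\E N\to\infty$ and $(\E N)^2=\Theta(\mu_k^2)$.

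For the second moment I would decompose $\E[N^2]=\sum_{P,Q}\PROB[\text{$P$ and $Q$ both good}]$ according to the class $\mathcal S_{k,i,j}$ of the ordered pair $(P,Q)$, where $i$ counts common edges and $j$ the components of $P\cap Q$. The diagonal $P=Q$ contributes $\E N=o((\E N)^2)$ (the reverse of an increasing path is almost surely not increasing, so it adds nothing). For edge-disjoint pairs ($i=0$) I would drop the $\beta$-typicality requirement, bounding $\PROB[\text{$P,Q$ both good}]$ by the probability that $P$ and $Q$ are both increasing and $C$-legal; these events depend on disjoint sets of edge labels, hence are independent, and since $\PROB[\text{$P$ good}]\ge(1-o(1))\PROB[\text{$P$ increasing and $C$-legal}]$ by Lemma~\ref{l:bonsai-likely}, this yields $\PROB[\text{$P,Q$ both good}]\le(1+o(1))\PROB[\text{$P$ good}]\PROB[\text{$Q$ good}]$; summing over all edge-disjoint pairs, their total contribution is at most $(1+o(1))(\E N)^2$. (Note this step uses only $C$-legality, so the correlations induced by typicality never appear.)

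For $P\ne Q$ with $i\ge1$ the essential tool is Lemma~\ref{l:optimal}: if $P$ is good then $\Lambda(P)$ contains no other $C$-legal $\beta$-typical path, so $\PROB[\text{$P,Q$ both good}]=0$ unless $Q\notin\Lambda(P)$, i.e.\ unless $j\ge2\alpha(c)+2$ or $k-i=|P\setminus Q|<(\log n)^{3/4}$. The first case is handled by Lemma~\ref{l:second-moment}\ref{it:many-components}, which gives $\Delta_{k,i,j}\le n^{-1/4}\mu_k^2$; summing over the $O((\log n)^2)$ admissible pairs $(i,j)$ leaves $o(\mu_k^2)$. In the second case, $\eps\gg(\log n)^{-1/4}$ guarantees, for $n$ large, that $k-i<(\log n)^{3/4}\le\varphi k$ and $k-i\le(\eps/\alpha)k/100$, so that Lemma~\ref{l:second-moment}\ref{it:large-intersection} covers $2\le j\le2\alpha(c)+2$ and Lemma~\ref{l:second-moment}\ref{it:j1} — applied with its parameter set to $\eps/\alpha$, so that $k=(1-\eps/\alpha)\alpha\log n$ — covers $j=1$; in each case $\Delta_{k,i,j}\le n^{-\Omega(\eps)}\mu_k^2$, and summing over the $O((\log n)^{3/4})$ admissible values of $i$ is again $o(\mu_k^2)$. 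Hence $\E[N^2]=\E N+(1+o(1))(\E N)^2+o(\mu_k^2)=(1+o(1))(\E N)^2$, and the Paley--Zygmund inequality gives $\PROB[N>0]\ge(\E N)^2/\E[N^2]\to1$, so with high probability $G_p(W)$ contains an increasing path of length at least $k$.

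The substantive obstacle — already absorbed into the preparatory Lemmas~\ref{l:optimal} and~\ref{l:second-moment} — is the contribution to $\E[N^2]$ of pairs of paths whose symmetric difference is a bounded number of short segments; these are far too numerous to control by a direct count, which is exactly why $N$ is restricted to $C$-legal $\beta$-typical paths, so that Lemma~\ref{l:optimal} can exclude such intersections deterministically among good paths. Granted those lemmas, the remaining work is the bookkeeping above, and in particular verifying that $\eps\gg(\log n)^{-1/4}$ is precisely the condition that places $(k,i,j)$ in the regime where the estimates of Lemma~\ref{l:second-moment} are available.
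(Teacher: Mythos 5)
Your proposal is correct and follows essentially the same route as the paper: a second moment argument applied to the count of $C$-legal, $\beta$-typical increasing paths of length $k$, with Lemma~\ref{l:bonsai-likely} giving the first moment, Lemma~\ref{l:optimal} deterministically excluding the dangerous intersection patterns, and Lemma~\ref{l:second-moment} controlling the remaining classes $\mathcal{S}_{k,i,j}$. The only deviations are cosmetic -- the choice $\beta=(\log n)^{-1/3}$ instead of $(\log n)^{-1/4}$, and bounding $\E[N^2]$ directly via Paley--Zygmund with an explicit (indeed slightly more careful) treatment of the edge-disjoint pairs, where the paper uses Chebyshev on the variance.
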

\begin{proof}
Let $C$ be a sufficiently large constant so that
Lemma~\ref{l:bonsai-likely} holds and let $\beta = (\log n)^{-1/4}$.
Let $\Pc$ be the collection of paths of length $k_{c,\eps}$ which are $C$-legal and  $\beta$-typical.
It suffices to show that $|\Pc|>0$ with high probability. To this end, we use the second-moment method.

A fixed path $P = v_1, \ldots, v_{k+1}$ in $K_n$ is increasing in $G_p(W)$ with probability $\frac{p^k}{k!}$, and conditioned on this event, the probability that $P$ is in $\Pc$ is at least $\frac 12$, by Lemma~\ref{l:bonsai-likely}. Hence,
\[ \EXP |\Pc| \geq \frac 12 \EXP |X_k| = \frac12
  \binom{n}{k+1}(k+1)p^k \geq  \frac{n^{k+1}p^k}{4k!} = \frac 14
  \mu_k~,
\]
recalling the definition of $\mu_k$ in~\eqref{eq:mu}.
It follows that
\[ \EXP|\Pc| \geq  \frac 12 \log n^{1-(\alpha - \eps) \log ((\alpha - \eps)/ec) -o(1)} = n^{\Omega(\eps)}~.
\]
For the second moment, Lemma~\ref{l:optimal} implies that $\Pc$
(deterministically) contains no two paths $P$ and $Q$ intersecting in
at most $2 \alpha +2$ components with $|P \cap Q | \leq k- \log^{3/4}n
$. So, for a given number of components $j$, define $I_j=\{j, \ldots,
k\}$ if $j \geq 2 \alpha + 2$, and $I_j = \{k- \lfloor \log^{3/4} n
\rfloor, \ldots, k\}$ otherwise. Thus, we have
$$\frac{1}{\EXP[|\Pc|^2]} \var (|\Pc|) \leq \frac{4}{\mu_k^2}\sum_{j =1}^{k}\sum_{i \in I_j} \Delta_{k,i,j}~.$$
Since $\log^{3/4}n \ll \eps \log n,$ Lemma~\ref{l:second-moment}
implies that $\Delta_{k,i,j} / \mu_k^2 = n^{-\Omega(\eps)}$ for
all $j$ and $i \in I_j$, and therefore $ \var(|\Pc|) = o(\EXP[|\Pc|^2])$.
By Chebyshev's inequality, $|\Pc|\geq 1$ with high probability, as desired.
	\end{proof}

\subsection{Proof of Proposition~\ref{thm:longestpaths_infinity}} 
	\label{sec:longestpath_infinity}

The asymptotics for $\max\{L(i,j): 1\le i,j\le n\}$ is a result of
\citet{AnFeSuTa20}. Our contribution is to prove that the same actually holds
for $L(1,2)$; the claim for $\max\{L(1,i): 1\le i\le n\}$ is then a
straightforward consequence. Partition the interval $[0,p)$ into three
pieces $[0,p_1) \cup [p_1, p_2) \cup [p_2, p)$, where $np_1=2\log n$
and $np_2 = np - 2\log n$.  Let $W_1,W_2,W_3$ be the collections of edge
weights falling in the respective intervals $[0,-\log(1-p_1))$,
$[-\log(1-p_1), -\log(1-p_2))$,
and $[-\log(1-p_2), -\log(1-p))$. This decomposes $G(W)$ into the union of three disjoint random
simple temporal graphs $G(W_1), G(W_2)$, and $G(W_3)$. Note that the
concatenation of three monotone paths one from  $G(W_1)$, one from
$G(W_2)$, and one from $G(W_3)$ (such that the endpoint of the first
path is the starting point of the second and the endpoint of the
second path is the starting point of the third) is a monotone path in $G(W)$.

Let $i^\star$ and $j^\star$ be the extremities of the longest
increasing path in $G(W_2)$. As shown in \cite{AnFeSuTa20}, this path
has length $(e-o(1))(p_2-p_1) n$, with high probability.
By part (i) of Theorem~\ref{thm:longestpaths_log}, there exists an
increasing path in $G(W_1)$ from vertex $1$ to $i^\star$
and another one in $G(W_3)$ from $j^\star$ to vertex $2$.
As a consequence, with high probability, there exists an increasing path connecting $1$ to $2$, whose length is at least $(e-o(1))(p_2-p_1)n=e np - o(np)$.

%
\def \nst{n_*}
\section{Reachability from a single vertex}
\label{sec:rrt}
The main result of this section is the missing part of the proof of \eqref{it:longest-1j} of Theorem~\ref{thm:longestpaths_log}. The upper bound for the length of the longest monotone path
with vertex $1$ as a starting point is shown in  Section
\ref{sec:first_moment-longest}.
In order to prove the corresponding lower bound, 
we show how to construct an increasing path from vertex $1$ of length $epn(1-o(1))$, when $p = c\log n / n$
for some $c>0$.
This is done by analysing an exploration process on increasing paths from vertex $1$.
By doing so, we are able to a answer some further questions.
Namely, how many vertices can be reached from a specified vertex, and in how many steps? It turns out that for $p = c\log n / n$ with $c\leq 1$, with high probability, vertex $1$ can reach $e^{pn(1-o(1))}=n^{c-o(1)}$ vertices, and most of these vertices can be reached in $pn (1+ \eps)=(1+ \eps)c\log n$ steps. The former property was also shown by~\citet{CaRaReZa22}.

We study these questions by constructing a tree in $G_p(W)$ rooted at vertex $1$, consisting of increasing paths from $1$. The resulting random tree is distributed as a uniform random recursive tree on $e^{np(1-o(1))}$ vertices.
It is well known (see \citet{Devroye1987,Pittel1994}) that, with high probability, such a tree has height $enp(1-o(1))$, which gives an increasing path in $G_p(W)$. For $p = \frac{c\log n}{n}$ with $c>1$, we may compose paths constructed in roughly $\lfloor 1/c \rfloor $ disjoint \textit{layers}.

Recall that $G_p(W)$ is the random graph generated using exponentially distributed labels $W$. 
Moreover, $B_\ell(v)$ is the set of vertices reachable from vertex $v$ by increasing paths in $G$ consisting of at most $\ell$ edges (including the vertex $v$), and note that $B_n(v)$ is the set of all vertices reachable from $v$. Moreover, we have $B_\ell(1) \leq \sum_{k \leq \ell}Y_k$. where $Y_k$ is the number of increasing paths of length $k$ starting at vertex $1$. Occasionally we abbreviate $B_\ell = B_\ell(1)$. 

\subsection{The upper bound } 
\label{sub:subsection_name}


Before proving Theorem~\ref{thm:reachable-from-1}, we show some  upper bounds for the number of vertices reachable from vertex $1$, using the first-moment method. The first one states that part \ref{it:Bell-lower} of  Theorem~\ref{thm:reachable-from-1} is approximately optimal, and the second bound implies that most vertices in $B_n(1)$ are \textit{not too far} from vertex $1$.


\begin{prop}
	\label{prop:upper}
	For $p = p_n \in (0, 0.1)$ and $\eps \in (0,1)$, if $n$ is sufficiently large, then
	\begin{equation}
		\EXP |B_n(1)| \leq e^{np + np^2} \text{\qquad and \qquad}
		\EXP \left[\sum_{k>(1+\eps) np}Y_k \right] \leq  2e^{np(1- \eps^2/4)}~.
	\end{equation}
\end{prop}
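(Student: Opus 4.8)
The plan is to bound both expectations by exact computations with the Poisson-like formulas for $\EXP Y_k$ from~\eqref{eq:bounds_counts}, then estimate the resulting sums. Recall from~\eqref{eq:bounds_counts} that $\EXP Y_k = \binom{n-1}{k}k!\, p^k/k! = \binom{n-1}{k}p^k \le (np)^k/k!$, and since $B_n(1) \le \sum_{k\le n-1}Y_k$, we get $\EXP|B_n(1)| \le \sum_{k=0}^{n-1}(np)^k/k! \le e^{np}$. This is already stronger than the claimed bound $e^{np+np^2}$, so the first inequality is essentially immediate; I would just present the clean estimate $\EXP|B_n(1)| \le e^{np}\le e^{np+np^2}$, the slack being harmless.

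For the second inequality, I would write $\EXP\big[\sum_{k>(1+\eps)np}Y_k\big] \le \sum_{k>(1+\eps)np}(np)^k/k!$ and recognise this as (a truncation of) the upper tail of a Poisson($np$) distribution, or equivalently estimate the tail sum directly. The cleanest route is a Chernoff-type argument: for $\lambda := np$ and $m := \lceil(1+\eps)\lambda\rceil$, one has $\sum_{k\ge m}\lambda^k/k! \le e^{\lambda}\,\PROB\{\mathrm{Poisson}(\lambda)\ge m\} \le e^{\lambda}\, e^{-\lambda}\frac{(e\lambda)^m}{m^m}\cdot\frac{1}{1-\lambda/m}$-style bound; more simply, use the standard Poisson tail inequality $\PROB\{\mathrm{Poisson}(\lambda)\ge (1+\eps)\lambda\} \le e^{-\lambda\varphi(\eps)}$ with $\varphi(\eps)=(1+\eps)\log(1+\eps)-\eps \ge \eps^2/(2(1+\eps/3)) \ge \eps^2/4$ for $\eps\in(0,1)$ (this last step uses a routine convexity/Taylor estimate). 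Hence $\sum_{k> (1+\eps)np}(np)^k/k! \le e^{np}\cdot e^{-np\,\eps^2/4}\cdot\big(\text{geometric factor}\big)$; the geometric factor, which accounts for passing from the single largest term / exponential moment bound back to the tail sum, is bounded by a constant (at most $2$, say) once $n$ is large, giving $\le 2e^{np(1-\eps^2/4)}$ as claimed.

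To make the geometric factor explicit without invoking a black-box Poisson bound, I would instead argue directly: for $k\ge m=\lceil(1+\eps)np\rceil$, consecutive terms satisfy $\frac{(np)^{k+1}/(k+1)!}{(np)^k/k!} = \frac{np}{k+1} \le \frac{1}{1+\eps} \le \frac{1}{1+\eps}$, so the tail sum is at most $\frac{(np)^m/m!}{1-1/(1+\eps)} = \frac{1+\eps}{\eps}\cdot\frac{(np)^m}{m!}$, and then bound the leading term $(np)^m/m! \le (enp/m)^m \le (e/(1+\eps))^m = e^{-m(\log(1+\eps)-1+\dots)}$... wait, this needs care: $(e/(1+\eps))^m = e^{m(1-\log(1+\eps))}$ which is not small. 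The correct leading-term bound must retain the $e^{np}$ prefactor: $(np)^m/m! \le e^{np}\PROB\{\mathrm{Poisson}(np)=m\}$ is circular. So the honest path is: use $(np)^k/k! \le e^{np} \cdot \frac{e^{-np}(np)^k}{k!}$ and sum the genuine Poisson tail, i.e. I do need the Poisson tail bound $\PROB\{\mathrm{Poisson}(\lambda)\ge(1+\eps)\lambda\}\le e^{-\lambda\eps^2/4}$ (valid for $0<\eps<1$), which is standard and which I would cite or prove via the exponential Chebyshev inequality $\PROB\{\mathrm{Poisson}(\lambda)\ge t\}\le e^{-\lambda}(e\lambda/t)^t$ together with the elementary inequality $(1+\eps)\log(1+\eps)-\eps\ge\eps^2/4$ for $\eps\in(0,1)$. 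The constant $2$ absorbs the fact that $e^\lambda\cdot e^{-\lambda}=1$ exactly, so no geometric factor is even needed — the ``$2$'' is pure slack.

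The main (and only) obstacle is purely bookkeeping: making sure the Poisson tail estimate is applied with the right constants so that the exponent comes out as $np(1-\eps^2/4)$ and the multiplicative constant as $2$ (rather than, say, some $\eps$-dependent constant). There is no conceptual difficulty — both bounds are first-moment computations using $\EXP Y_k \le (np)^k/k!$ and the well-known Poisson concentration inequality; the hypothesis $p<0.1$ is only used to make the $-\log(1-p)=p+O(p^2)$ correction negligible and to keep $np+np^2$ a valid (loose) bound. I would state the Poisson tail bound as a one-line lemma or cite it, and the rest is two short displays.
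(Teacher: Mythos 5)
Your proposal is correct and follows essentially the same route as the paper: write $\EXP Y_k=\binom{n-1}{k}p^k$, sum it to bound $\EXP|B_n(1)|$, and control the tail $\sum_{k>(1+\eps)np}\EXP Y_k$ by a standard Chernoff-type concentration bound. The only difference is cosmetic: you dominate by a Poisson$(np)$ tail via $\binom{n-1}{k}p^k\le (np)^k/k!$ (even avoiding the $e^{np^2}$ correction), whereas the paper multiplies by $(1-p)^{-(n-1)}\le e^{np+np^2}$ to compare with Binomial$(n-1,p)$ point probabilities and then cites a binomial tail bound.
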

\begin{proof}
	Firstly, note that for any $k$
	\begin{align}
		\label{eq:Bell}
		\EXP Y_k =  \binom{n-1}{k}p^k  \leq (1-p)^{-n+1}\sum_{k= 1}^{n-1} \binom {n-1}{k}p^k(1-p)^{n-1-k}~,
	\end{align}
where the last inequality follows from $(1-p)^{-k} \geq 1$. The terms $\binom {n-1}{k}p^k(1-p)^{n-1-k}$ are just point probabilities of the appropriate binomial distribution. 

Hence, $\EXP{|B_n(1)|}  \leq (1-p)^{-n+1}  \leq e^{np + np^2}$.
For the second inequality, using
a standard binomial tail bound (see, e.g., \cite[Corollary 2.3]{jlr00}), we have
	$$\EXP{ \left[\sum_{k>(1+\eps) np}Y_k \right] } \leq e^{np + np^2} \PROB (\Bin(n-1, p) > (1+\eps) pn) \leq 2e^{np - \frac{\eps^2}{4}np}~,$$
	as required.
	\end{proof}

\subsection{Embedding a random recursive tree} 
\label{sub:embedding_a_random_recursive_tree}

In this section, we construct a coupling of $G_p(W)$ with a uniform random recursive tree on $r$ vertices, denoted by $T_r$ (where $r$ will be set to $e^{np(1-o(1))}$).  To control the labels in $G_p(W)$, we relate them to the following exponential random variables. For natural numbers $\nst$ and $i$, let $Z_i'$ be mutually independent random variables with $Z_i' \sim \Exp(i\nst)$ (so that $\EXP Z_i' = 1/(i \nst)$). First we state a concentration bound that is needed for the coupling. The proof can be found at the end of this subsection.
\begin{lemma}
	\label{lemma:conc-exp}
	For $\nst, i \in \N$, let $Z_i'$ be as above. For $r \geq \log \nst$ and $\eps >0$,
	$$ \PROB \left(\sum_{i=1}^r Z_i' > \frac{(1+ \eps) \log r}{\nst}\right) = e^{-\Omega( \eps^2\log r)}~.$$
\end{lemma}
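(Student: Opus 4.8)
The plan is to prove the concentration bound via a standard Chernoff/Laplace-transform argument on the sum $\sum_{i=1}^r Z_i'$ of independent exponentials with decreasing means $1/(i\nst)$. First I would normalize: write $S_r = \nst \sum_{i=1}^r Z_i' = \sum_{i=1}^r E_i/i$, where $E_i \sim \Exp(1)$ are i.i.d. Then $\EXP S_r = H_r = \sum_{i=1}^r 1/i = \log r + O(1)$, so the claim is that $\PROB(S_r > (1+\eps)\log r) = e^{-\Omega(\eps^2 \log r)}$ for $r \geq \log\nst$ (in fact the dependence on $\nst$ is irrelevant here; I suspect the hypothesis $r\ge\log\nst$ is only used downstream in the coupling, but I will keep it).

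The key computation is the moment generating function. For $0 \le \lambda < 1$ (we will need $\lambda$ bounded away from $1$), independence gives
\begin{equation*}
\EXP e^{\lambda S_r} = \prod_{i=1}^r \EXP e^{(\lambda/i) E_i} = \prod_{i=1}^r \frac{1}{1-\lambda/i} = \prod_{i=1}^r \frac{i}{i-\lambda}~.
\end{equation*}
Taking logarithms, $\log \EXP e^{\lambda S_r} = -\sum_{i=1}^r \log(1-\lambda/i)$. Using $-\log(1-x) \le x + x^2$ for $x \in [0,1/2]$ (restricting to $\lambda \le 1/2$, which suffices since $\eps$ can be assumed small), this is at most $\lambda H_r + \lambda^2 \sum_{i\ge 1} 1/i^2 = \lambda H_r + O(\lambda^2)$. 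Then Markov's inequality on $e^{\lambda S_r}$ yields, for the choice $\lambda = c\eps$ with $c$ a small absolute constant,
\begin{equation*}
\PROB(S_r > (1+\eps) H_r) \le \exp\bigl(-\lambda(1+\eps)H_r + \lambda H_r + O(\lambda^2)\bigr) = \exp\bigl(-\lambda\eps H_r + O(\lambda^2)\bigr)~,
\end{equation*}
and since $H_r = \log r + O(1) \ge \tfrac12 \log r$ for $r$ large (and the small-$r$ range is absorbed into the constant), optimizing $\lambda \asymp \eps$ gives the bound $e^{-\Omega(\eps^2 \log r)}$. One has to be slightly careful to replace $(1+\eps)\log r$ in the statement by $(1+\eps')H_r$ for a comparable $\eps'$; since $H_r \le \log r$, the event $\{S_r > (1+\eps)\log r\}$ is contained in $\{S_r > (1+\eps)H_r\}$, so the bound transfers directly.

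The main (minor) obstacle is bookkeeping rather than anything deep: controlling the cases where $r$ is small or $\eps$ is not small, so that the $O(\cdot)$ terms and the passage from $\log r$ to $H_r$ are uniform. For $\eps \geq 1$ one can just take $\lambda = 1/2$ fixed and get an even better (linear in $\log r$) bound, so the interesting regime is $\eps$ small, where the quadratic-in-$\eps$ exponent is genuine and comes from the second-order term in $-\log(1-\lambda/i)$ together with the optimization $\lambda \asymp \eps$. No lower tail is needed, so the one-sided argument above is all that is required.
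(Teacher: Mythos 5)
Your route is essentially the paper's: the paper simply invokes a ready-made concentration inequality of Janson for sums of independent exponentials with distinct rates (applied with $a_i = i\nst$, $a_* = \nst$, $\mu = (\log r + O(1))/\nst$, together with $\eps - \log(1+\eps) = \Omega(\eps^2)$), and that inequality is proved by exactly the Laplace-transform/Chernoff computation you carry out by hand; your write-up is a self-contained version of the same argument, and the core MGF bound $\log \EXP e^{\lambda S_r} \le \lambda H_r + O(\lambda^2)$ for $\lambda \le 1/2$ is correct. One step is wrong as written, however: you claim $H_r \le \log r$, but in fact $H_r \ge \log(r+1) > \log r$, so the containment $\{S_r > (1+\eps)\log r\} \subseteq \{S_r > (1+\eps)H_r\}$ fails. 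The repair is exactly the adjustment you flag and then dismiss: since $H_r \le \log r + 1$, once $\eps \log r$ exceeds an absolute constant one has $(1+\eps)\log r \ge (1+\eps/2)H_r$, so the event in the statement is contained in $\{S_r > (1+\eps/2)H_r\}$ and your Chernoff bound with $\eps/2$ in place of $\eps$ still gives $e^{-\Omega(\eps^2 \log r)}$; the complementary regime $\eps \log r = O(1)$ is absorbed into the implied constant. This mirrors the paper's own passage from $(1+\eps)\log r/\nst$ to $(1+\eps/2)\mu$. Finally, your aside for $\eps \ge 1$ gives an exponent linear in $\eps \log r$, which implies the stated $\eps^2$ exponent only for bounded $\eps$; but the paper's proof has the identical restriction (its inequality $\eps - \log(1+\eps) = \Omega(\eps^2)$ also needs $\eps = O(1)$), and in the application $\eps$ is a small constant, so this is harmless.
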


Let $A_r(\eps)$ be the event that
$\sum_{i=1}^r Z_i' \leq \frac{(1+ \eps) \log r}{\nst}$. We set $\nst =  \lfloor n(1-2/\log n)\rfloor$. 
Moreover, let $E$ be the event that for $p = 2 \log n / n$, the graph $G_p(W)$ has maximum degree at most $10 \log n$. We established that for $r \sim n^c$, $\PROB (A_{r}(\eps)) = 1- n^{-\Omega(\eps^2)}$, and Chernoff bounds imply that $\PROB(E) \geq 1-n^{-\Omega(1)}$.
\begin{lemma} 
\label{lem:urrt}
  Let $p = c \log n /n$ with $c \leq 1$,
  $\nst = \lfloor n(1-2/\log n)\rfloor$, $\eps>0$ and $r = \lfloor e^{np(1-\eps)}\rfloor$. For all sufficiently large $n$, there is
  a coupling between the random variables $(Z_i')_{i \in \N}$, the
  graph $G_p(W)$ and the random recursive tree $T_r$ such that,
  assuming $A_r(\eps/2)$ and $E$, there is a tree $\wt{T}_r$ rooted
  at vertex $1$ which consists of increasing paths from $1$ and is
  isomorphic to $T_r$.
\end{lemma}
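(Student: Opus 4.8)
The plan is to build the coupling by exploring the increasing-path tree from vertex $1$ one vertex at a time, in order of increasing label, and to match this exploration step-for-step with the standard sequential construction of the random recursive tree $T_r$.

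\textbf{Setup of the exploration.} I would process edges of $G_p(W)$ in increasing order of their labels $W_e$. Maintain a growing subtree $\wt T$ rooted at vertex $1$; initially $\wt T = \{1\}$. When an edge $\{u,v\}$ with $u \in \wt T$, $v \notin \wt T$ is revealed, and provided the label $W_{u,v}$ exceeds the label of the edge joining $u$ to its parent in $\wt T$ (so that the path from $1$ to $v$ through $u$ is increasing), we add $v$ to $\wt T$ as a child of $u$. (If $u$ is the root we only need $W_{u,v} \le \tau$.) Stop once $\wt T$ has $r$ vertices. By construction $\wt T$ consists of increasing paths from $1$, so it suffices to show that, on the event $A_r(\eps/2) \cap E$, this process produces exactly $r$ vertices and the resulting tree is distributed as $T_r$; the coupling then is the obvious one where we read off both $T_r$ and the $Z_i'$ from the same randomness.

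\textbf{Why the tree is a uniform random recursive tree.} The key observation is that when $\wt T$ has $i$ vertices, the next vertex to be attached is attached to a \emph{uniformly random} vertex of $\wt T$, independently of the past, \emph{provided} no "collision'' occurs — i.e.\ provided the next label that would extend $\wt T$ is smaller than any label among the non-tree edges that could create an increasing path, and provided we never try to attach a vertex that is already present. Here is where the memorylessness of the exponential labels is used: conditionally on the current state, each frontier vertex $u \in \wt T$ with parent-edge label $w_u$ has, among the $n - i$ vertices outside $\wt T$, a collection of edges whose labels, restricted to being $> w_u$, are i.i.d.\ exponential; by memorylessness the \emph{excess} over $w_u$ of the minimal such label is $\Exp((n-i))$ (at the root, $\Exp(n)$-ish, which is why we pass to $\nst$ and absorb the discrepancy via the degree bound $E$ and the definition of $\nst$). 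The minimum over the $i$ frontier vertices of these excesses is then $\Exp(i\cdot(\text{something close to }n))$, and by symmetry the argmin is a uniform vertex of $\wt T$. Coupling the $i$-th such increment with $Z_i'$ (one needs $Z_i' \sim \Exp(i\nst)$, and the true increment is stochastically a bit smaller once $n$ is replaced by the effective $\nst$, using $E$ to bound how many tree-edges are "used up'' at each vertex), the total label used after attaching $r$ vertices is at most $\sum_{i=1}^r Z_i'$, which on $A_r(\eps/2)$ is at most $(1+\eps/2)\log r / \nst \le np$ for $r = \lfloor e^{np(1-\eps)}\rfloor$ and $n$ large. Hence all $r$ attachments happen before the label budget $\tau = -\log(1-p) \approx np$ is exhausted, so the process does reach size $r$.

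\textbf{Handling collisions and closing the argument.} The one subtlety is that the above "uniform attachment'' description breaks if the next revealed edge leads to a vertex already in $\wt T$ (a "back edge''), or if two frontier edges realize their minima in a way that isn't a clean single attachment. Back edges are harmless for the \emph{isomorphism} claim — we simply skip them — but they must not consume too much of the label budget or distort the distribution; this is controlled by the degree bound $E$ (each vertex has $\le 10\log n$ incident edges with small labels, so the number of "wasted'' reveals is negligible) and by the fact that we defined $\nst$ with the $1-2/\log n$ slack precisely to dominate the loss. The main obstacle, and the step requiring the most care, is making this domination rigorous: one must argue that the $i$-th genuine attachment increment is stochastically dominated by $Z_i' \sim \Exp(i\nst)$ \emph{simultaneously for all $i \le r$}, in a single coupling, while accounting for the $\Theta(1/\log n)$-fraction of vertices near $\wt T$ that may already be occupied and the up to $10\log n$ edges per vertex that are "burned.'' I would do this by revealing the exponential labels incrementally (again via memorylessness) and constructing, at each step, the true increment and $Z_i'$ on the same probability space with the required inequality; the choice $\nst = \lfloor n(1-2/\log n)\rfloor$ makes $n - i - O(\log n) \ge \nst$ for all relevant $i \le r = n^{c - o(1)}$, which is exactly what is needed. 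Once the coupling is in place, on $A_r(\eps/2) \cap E$ the exploration terminates with an increasing-path tree $\wt T_r$ on $r$ vertices whose shape is that of $T_r$, as claimed.
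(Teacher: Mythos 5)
Your overall strategy---revealing labels sequentially, using memorylessness to identify each attachment increment as an exponential variable, coupling these increments with the $Z_i'$, and invoking $E$ and $A_r(\eps/2)$ to keep the total label below $-\log(1-p)$---is the same as the paper's. However, there is a genuine gap at the central step: you never establish that the shape of the constructed tree is \emph{exactly} that of a uniform random recursive tree, which is what the lemma asserts (on the good event, $\wt{T}_r$ must be isomorphic to $T_r$) and what the downstream application to URRT height results requires. In your exploration, when the tree has $i$ vertices, the next vertex attaches to the tree vertex whose collection of still-unrevealed edges to outside vertices realizes the minimal label. These collections do not have equal sizes: the root has lost essentially no edges, while a vertex $u$ added at label $w_u$ has already had all its incident edges with labels below $w_u$ revealed and wasted (up to $10\log n$ of them on $E$), and further edges are lost to vertices already in the tree. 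Consequently, conditionally on the past, the attachment vertex is \emph{not} uniform over the $i$ tree vertices; the per-step bias is small but it compounds over $r=n^{c-o(1)}$ steps, and asserting that the ``distortion is controlled by the degree bound'' is not a proof---one would need a quantified approximation (say, in total variation over the entire tree shape), which is a different and more delicate argument. Your last paragraph explicitly defers exactly this step (``I would do this by\ldots''), and the symmetry claim is further muddled by mixing the per-vertex thresholds $w_u$ with the global current label, under which the memoryless excesses are actually compared.

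The paper closes precisely this gap with one bookkeeping idea that is missing from your proposal: it \emph{forces} exact uniformity by maintaining, for every tree vertex $v_j$, a candidate set $S_i(j)$ of unexposed incident edges of exactly the same cardinality $n-m-i+1$ (with $m=\lceil n/\log n\rceil$, so $\nst=n-2m$), all of whose labels are known only to exceed the common threshold $Z_1+\cdots+Z_{i-1}$. The minimum over $\bigcup_j S_i(j)$ then falls at each tree vertex with probability exactly $1/i$, so the shape evolves exactly as $T_r$, and the increment is exactly $\Exp\bigl(i(n-m-i+1)\bigr)$, which can be coupled below $Z_i'\sim\Exp(i\nst)$ because $i\le r\le m$ gives $n-m-i+1\ge n-2m\ge \nst$. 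The event $E$ is used only to guarantee that the freshly attached vertex still has at least $n-m-i$ incident edges with unexposed labels above the threshold, so a candidate set of the prescribed size can be formed; surplus edges at the other vertices are simply discarded to keep all sets the same size. If you add this equalization of candidate sets (and work with the common threshold rather than the parent labels), your argument becomes the paper's proof; without it, the claimed isomorphism with $T_r$ is not justified.
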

\begin{proof}
In order to construct the desired coupling, we expose the edge labels and build a tree distributed as $T_r$ according to the following procedure. Let  $m = \lceil n / \log n\rceil$, so that $\nst = n-2m$.
	
Let $v_1 = 1$, and $V(\wt{T}_1) = \{v_1 \}$.  Let $S_1(1)$ be an
arbitrary set of $n-m$ edges incident to $v_1$. Let
$(v_1,v_2) \in S_1(1)$ be the edge of minimal label $Z_1$. Note that
$Z_1$ is the minimum of $n-m$ exponential random variables with
parameter 1, so $Z_1 \sim \Exp(n-m)$. We may couple (or jointly sample)
$Z_1$ with $Z_1'$ so that $Z_1 \leq Z_1'$. Now let
$S_2(1) = S_1(1) \setminus \{(v_1,v_2) \}$, and note that $S_2(1)$ is a
set of $n-m-1$ edges incident to $v_1$ of label at least $Z_1$. Let
$S_2(2)$ be an arbitrary set of $n-m-1$ edges incident to $v_2$ whose
label is at least $Z_1$, which exists assuming the event $E$. Let
$\wt{T}_2$ be the tree with a single edge $\{v_1,v_2\}$.
	
	In general, for $i \geq 2$ our inductive hypothesis is that we constructed $\wt{T}_i$, its final vertex is a leaf $v_i$, and the only exposed edge incident to $v_i$ has label $\sum_{k=1}^{i-1} Z_{k}$. Moreover, for $j \in \{1, \ldots, i \}$, there is a set $S_i(j)$ consisting of $n-m -i+1$ edges incident to $v_j$, whose other endpoint is not in $V(\wt{T}_i)$. The labels of the edges in $S_i(j)$ have not been exposed, but they are known to be at least $\sum_{k=1}^{i-1} Z_{k}$.
	
	To construct $\wt{T}_{i+1}$, expose the edge  $e_{i} \in \cup_{j=1}^i S_i(j)$ with a minimal label, and let its endpoint outside of $V(\wt{T}_i)$ be $v_{i+1}$. Denote the label of $e_{i}$ by $Z_1 + Z_2 + \dots + Z_i$. Crucially, $v_{i+1}$ is equally likely to be attached to any of the vertices $v_1, \ldots, v_i$, which is why $\wt{T}_{i+1}$ is distributed as $T_{i+1}$.
	
	Moreover, we claim that $Z_i \sim \Exp(i(n-m-i+1))$. Indeed,
        using the memoryless property of the exponential distribution,
        for each edge $e \in \cup_{j=1}^i S_i(j)$,
        $W_e - \sum_{k=1}^{i-1} Z_{k} $ is distributed as $\Exp(1)$,
        and the minimum of those $i(n-m-i+1)$ variables has the
        distribution $\Exp(i(n-m+i))$. Since $i \leq r \leq m$, we have
        $n-m-i+1 \geq n-2m = \nst$, so we may couple $Z_i$ with $Z_i'$
        so that $Z_i \leq Z_i'$.
	
	It remains to construct the sets $S_{i+1}(j)$. Let $F=
        F_{i+1}$ be the set of edges incident to $v_{i+1}$ whose
        labels are at most
        $Z_1 + \cdots + Z_i \leq \frac{(1+\eps/2)\log r}{\nst} \leq
        \frac{2 \log n}{n}$. Assuming the event $E$, we have
        $|F|\leq 10\log n < m/2$. Hence there exists a set of $n-m-i$
        edges incident to $v_{i+1}$ which are not incident to
        $v_1, \ldots, v_i$ and do not lie in $F$; denote such a set by
        $S_{i+1}(i+1)$. For $j \leq i$, note that the labels of all
        edges in $S_i(j)$ are at least $Z_1 + \dots + Z_i$, and that
        we have only exposed the label of $e_i$. Thus we can let
        $S_{i+1}(j) \subset S_i(j)$ be an arbitrary set of $n-m-i$
        edges.
	
	The procedure is continued until $i = r = \lfloor e^{np(1-\eps)}\rfloor$. To verify that all the edges of $\wt{T}_r$ have labels at most $-\log(1-p)$, note that, conditionally on the event $A_r(\eps/2)$, for sufficiently large $n$, 
	$$\sum_{i=1}^r Z_i \leq \sum_{i=1}^r Z_i' \leq  \frac{(1+\eps/2) \log r}{\nst} \leq \frac{(1+\eps)(1-\eps) np}{n(1- (3\log n)^{-1})} \leq -\log(1-p).$$
	This completes the proof.
\end{proof}

Lemma \ref{lem:urrt} is crucial in the proof of Theorem~\ref{thm:reachable-from-1}
that uses the typical properties of random recursive trees and the fact that 
the events $E$ and  $A_r(\eps)$ occur with high probability.

\begin{proof}[Proof of Theorem~\ref{thm:reachable-from-1}]  We
  generate $G_p(W)$ using exponential labels $W$.  Let
  $r =\lfloor e^{np(1-\eps/2)} \rfloor =\lfloor
  n^{c(1-\eps/2)}\rfloor$. Since the events $A_r(\eps/4)$ and $E$
  occur with probability $1-n^{-\Omega(\eps^2)}$, we may assume that
  $G_p(W)$ contains a tree $\wt{T}_r$ rooted at vertex $1$ which
  consists of increasing paths from $1$ and is distributed as a uniform random
  recursive tree on $r$ vertices. Note that this event implies
	\begin{equation}
			\label{eq:Bn1}
			|B_n(1)| \geq n^{c(1-\eps/2)}~.
	\end{equation}
Part~\ref{it:height} of the theorem follows from the fact that, with probability $1- n^{-\Omega(\eps)}$, $\wt{T}_r$ contains a path of length at least $(1-\eps/2)e \log r \geq (1-2\eps)ec \log n$~[Corollary 1.3]{af13}. (Earlier proofs without explicit bounds on the failure probability can be found in \cite{Pittel1994,Devroye1987}.)

In order to prove part \ref{it:Bell-lower} of the theorem, note that
Proposition~\ref{prop:upper} and Markov's inequality imply that, with
high probability, for $\ell = (1+10\sqrt{\eps})c \log n$, there are at
most $n^{c(1-\eps/4)}$ paths starting at vertex $1$ of length at least
$\ell$. Hence, using~\eqref{eq:Bn1}, most vertices in $B_n(1)$ are at
distance at most $\ell$ from vertex $1$.
	\end{proof}

For $c=1+\eps$, Theorem~\ref{thm:reachable-from-1}~\ref{it:Bell-lower} implies that $B_{\log n}(1)=n^{1-o(1)}$. However, simply taking $\eps \log n$ additional steps, each using an interval of labels of length, say $10/n$, shows that in fact $B_{(1+\eps)\log n }= (1-o(1)) n$, which implies that $\ell(1,2) \sim \log n$ with high probability. Using arguments which will be presented in Section~\ref{sec:typical-worst}, one can reprove the 1-2-3 phase transition from~\cite{CaRaReZa22} with path lengths at most $(1+o(1)) \log n$.

It remains to prove Lemma~\ref{lemma:conc-exp}, which gives a lower bound for the probability of $A_r(\eps)$. It follows easily from the following result due to Janson~\cite{janson18}.

\begin{lemma}
	\label{lemma:conc-janson}
	Let $X_i \sim \Exp(a_i)$ be mutually independent random variables. Define
	$$\mu = \EXP \left[\sum_{i =1}^r X_i \right] = \sum_{i=1}^r \frac{1}{a_i} \quad \text{and} \quad a_* = \min_{i \in [r]}a_i~.$$
	For $\eps>0$, we have
	$$\PROB \left(\sum_{i \in [r]} X_i \geq (1+\eps)\mu \right) \leq (1+\eps)^{-1} e^{-a_* \mu(\eps - \log (1+\eps))}~.$$
\end{lemma}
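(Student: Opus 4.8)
The statement is a Chernoff--Cram\'er large-deviation estimate for the sum $S := \sum_{i=1}^{r} X_i$ of independent, non-identically distributed exponentials, so the plan is to run the exponential Markov inequality. First I would observe that, for every $t\in(0,a_*)$, the moment generating functions $\E[e^{tX_i}] = a_i/(a_i-t) = (1-t/a_i)^{-1}$ are all finite, hence by independence $\E[e^{tS}] = \prod_{i=1}^{r}(1-t/a_i)^{-1}$, and Markov's inequality gives
\[
\PROB\big(S \ge (1+\eps)\mu\big)\;\le\; e^{-t(1+\eps)\mu}\prod_{i=1}^{r}\frac{1}{1-t/a_i}\qquad\text{for every }t\in(0,a_*).
\]

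The main work is to reduce the product to a quantity depending only on $a_*$ and $\mu=\sum_i a_i^{-1}$. Here I would invoke the elementary convexity fact that, for a fixed $\sigma\in(0,1)$, the function $u\mapsto -\log(1-u)$ vanishes at $0$ and is convex on $[0,\sigma]$, hence lies below its chord there: $-\log(1-u)\le \frac{-\log(1-\sigma)}{\sigma}\,u$ for $u\in[0,\sigma]$. Applying this with $\sigma=t/a_*$ — legitimate since $t/a_i\le t/a_*=\sigma$ for all $i$ — and summing over $i$ yields $-\sum_i\log(1-t/a_i)\le -a_*\mu\log(1-t/a_*)$, i.e.\ $\prod_i(1-t/a_i)^{-1}\le (1-t/a_*)^{-a_*\mu}$. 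Substituting back and writing $s=t/a_*\in(0,1)$, the bound becomes $\exp\!\big(-a_*\mu\,[\,s(1+\eps)+\log(1-s)\,]\big)$. The bracket is maximised at $s=\eps/(1+\eps)$, where it equals $\eps-\log(1+\eps)$, so this (admissible, as $s<1$) choice of $t$ gives
\[
\PROB\big(S\ge(1+\eps)\mu\big)\;\le\;\exp\!\big(-a_*\mu\,(\eps-\log(1+\eps))\big),
\]
which is exactly the claimed inequality apart from the prefactor $(1+\eps)^{-1}$.

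The step I expect to be the genuine obstacle — and the point at which I would ultimately defer to \cite{janson18} rather than reprove it — is extracting the extra factor $(1+\eps)^{-1}$. Morally it comes from a sharper treatment of the last step: after exponentially tilting so that $S$ is re-centred at $(1+\eps)\mu$, one must control $\widetilde{\E}\big[e^{-t(S-(1+\eps)\mu)}\one\{S\ge(1+\eps)\mu\}\big]$, and a factor of order $(1+\eps)^{-1}$ can be saved because the summand realising $\min_i a_i=a_*$ still carries a genuine exponential tail, which spreads the law of $S$ around the threshold on a scale comparable to $\eps\mu$. I would, however, also record that this refinement is not needed for our single use of the result: since $\mu=\sum_i a_i^{-1}\ge\max_i a_i^{-1}=a_*^{-1}$ gives $a_*\mu\ge 1$ (and in fact $a_*\mu\asymp\log r$ for the variables $Z_i'$ of Lemma~\ref{lemma:conc-exp}), and $\eps-\log(1+\eps)=\Omega(\min(\eps,\eps^2))$, the prefactor-free bound displayed above already yields the $e^{-\Omega(\eps^2\log r)}$ estimate needed in Lemma~\ref{lemma:conc-exp}. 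The remaining bookkeeping is routine.
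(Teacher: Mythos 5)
Your argument is correct, but it is worth noting that the paper does not prove this lemma at all: it is quoted verbatim from Janson \cite{janson18} (his tail bound for sums of independent exponentials), and the paper's only contribution is the citation. Your proposal instead gives a self-contained Chernoff--Cram\'er proof: the MGF computation, the chord/convexity bound $-\log(1-t/a_i)\le -(a_*/a_i)\log(1-t/a_*)$ reducing the product to a function of $a_*\mu$ alone, and the optimization $s=\eps/(1+\eps)$ giving the exponent $\eps-\log(1+\eps)$ are all accurate, so you obtain the stated bound without the prefactor $(1+\eps)^{-1}$. You are right that this prefactor is the one genuinely Janson-specific refinement (it comes from a sharper use of Markov's inequality applied to $X e^{tX}$ rather than $e^{tX}$ in his proof), and right that it is irrelevant here: in Lemma \ref{lemma:conc-exp} one has $a_*\mu=\log r+O(1)$ and only the exponential factor $e^{-\Omega(\eps^2\log r)}$ is used, so your weaker, prefactor-free inequality would serve the paper equally well. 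In short: the paper's route buys brevity and the sharper constant by outsourcing to \cite{janson18}; your route buys a short, elementary, self-contained argument at the cost of a prefactor that the application never needs. The only caveat is presentational: as written, your text proves a slightly weaker statement than the lemma as displayed, so if it were inserted in place of the citation, the lemma's statement should be adjusted (drop the $(1+\eps)^{-1}$) or the final refinement still attributed to Janson, exactly as you indicate.
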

\begin{proof}[{Proof of lemma~\ref{lemma:conc-exp}}.]
	Apply Lemma~\ref{lemma:conc-janson} with $a_i = i\nst$ and $a_* = \nst$. Note that in our case,
	$$\mu  = \sum_{i \in [r]} \frac{1}{i \nst} = \frac{\log r +O(1)}{\nst}~.$$
	Using $\eps - \log(1+\eps) = \Omega(\eps^2)$, we obtain
	$$\PROB \left(\sum_{i \in [r]} U_i' \geq (1+\eps) \frac{\log r}{\nst} \right) \leq \PROB \left(\sum_{i \in [r]} U_i' \geq (1+\eps/2)\mu \right) =e^{-\Omega( \log r\eps^2)}~,$$
	as required.
\end{proof}

	\subsection{Composing long paths}

We close this section by proving part \eqref{it:longest-1j} of Theorem \ref{thm:longestpaths_log}.
In fact, the following theorem allows much larger values of $p$ than the $p=c\log n/n$ considered 
in Theorem  \ref{thm:longestpaths_log}. On the other hand, note that for $p\gg \log n/n$, the statement follows
from Proposition \ref{thm:longestpaths_infinity}.

Our proof uses Theorem~\ref{thm:reachable-from-1} to construct a path of length $epn(1-o(1))$ for any $p$ of order $\log n / n$.
	
\begin{theorem}
		\label{thm:epn-path}
Let  $a\ge 1$ and let  $\eps >0$. If $p  \le \frac{(\log n)^{a}}{n}$, then, with high probability, $G_p$ contains an increasing path of length at least $(1-\eps)epn$. 
\end{theorem}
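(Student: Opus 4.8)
The plan is to reduce the general case $p \le (\log n)^a/n$ to the already-established case $p = c\log n/n$ (Theorem~\ref{thm:reachable-from-1}, part~\ref{it:height}, together with the composition idea used in Section~\ref{sec:longestpath_infinity}) by slicing the label interval $[0,\tau)$ into $O((\log n)^{a-1})$ consecutive blocks, each of which is itself a random simple temporal graph of density $\approx \log n / n$, and then concatenating a long increasing path from each block. Concretely, write $\tau = -\log(1-p)$ and choose an integer $M = M_n$ so that each block has $np/M \sim \log n$ (say $np/M = \log n$, so $M = \Theta((\log n)^{a-1})$); partition $[0,\tau)$ into $M$ consecutive subintervals $I_1,\dots,I_M$ with $np\cdot |I_t|/\tau \approx \log n$ for each $t$ (one can take them of equal $W$-length up to rounding, since $\tau = p(1+O(p))$). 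Let $W_t$ be the collection of edge-weights falling in $I_t$; this decomposes $G_p(W)$ into the edge-disjoint union of $M$ independent random simple temporal graphs $G^{(t)} := G(W_t)$, each distributed (up to the harmless $O(p)$ distortion noted in the paper's footnote) as $G_{p_t}(W)$ with $np_t = \log n(1+o(1))$, i.e. $c=1$.

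First I would invoke Theorem~\ref{thm:reachable-from-1}\ref{it:height} — or rather its constructive strengthening via Lemma~\ref{lem:urrt} — in each block. The key point is that I do not merely need a long increasing path to \emph{exist} in $G^{(t)}$: I need, for a prescribed starting vertex $u_t$, an increasing path in $G^{(t)}$ of length $(1-\eps')e\cdot np_t$ ending somewhere, so that its endpoint can serve as $u_{t+1}$. This is exactly what the random-recursive-tree embedding of Lemma~\ref{lem:urrt} gives: rooted at \emph{any} designated vertex, $G^{(t)}$ contains a copy $\wt T_r$ of a uniform random recursive tree on $r = e^{np_t(1-\eps'/2)}$ vertices made of increasing paths from the root, and such a tree has height $\ge (1-\eps')e\log r \ge (1-\eps)^{1/2}e\,np_t$ with probability $1 - n^{-\Omega(\eps^2)}$. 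So in block $t$, starting from $u_t$, I extract such a path $\pi_t$ of length at least $(1-\eps)e\,np_t$ and let $u_{t+1}$ be its endpoint. Because the blocks use disjoint label ranges and the labels strictly increase from $I_t$ to $I_{t+1}$, the concatenation $\pi_1 \cdot \pi_2 \cdots \pi_M$ is automatically an increasing path in $G_p(W)$, of total length at least $(1-\eps)e\sum_t np_t = (1-\eps)e\,np(1-o(1))$.

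The step I expect to be the main obstacle is the \emph{chaining of the blocks}: the path extracted in block $t$ must avoid the vertices already used in blocks $1,\dots,t-1$, and the embedding lemma must be applicable with a forced (indeed, revealed) root vertex $u_t$. Two observations handle this. (i) The total number of vertices used across all blocks is at most $M\cdot O(np_{\max}) = O((\log n)^{a})$, which is $n^{o(1)}$ — negligible compared to $n$. The proof of Lemma~\ref{lem:urrt} already maintains ``forbidden'' sets of size $O(\log n)$ at each vertex and only needs $n - O(n/\log n)$ usable edges per vertex; deleting a further $n^{o(1)}$ vertices costs nothing, so the embedding goes through verbatim in the induced subgraph on the unused vertices. (ii) Exposing the weights $W_1,\dots,W_{t-1}$ and the vertices of $\pi_1,\dots,\pi_{t-1}$ is independent of the weights $W_t$ restricted to the remaining vertex set, so $G^{(t)}$ conditioned on the past is still a fresh random simple temporal graph there, and $u_t$ is just a fixed vertex to which the lemma is applied. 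A union bound over the $M = (\log n)^{O(1)}$ blocks of the $1 - n^{-\Omega(\eps^2)}$ success probabilities keeps the overall failure probability at $n^{-\Omega(\eps^2)}\cdot(\log n)^{O(1)} = o(1)$. Finally I would note that for $p \gg \log n/n$ the claim is already contained in Proposition~\ref{thm:longestpaths_infinity}, so it suffices to treat $p$ of order between $\log n/n$ and $(\log n)^a/n$, where the slicing produces $M$ bounded below and the argument above applies; a cosmetic remark disposes of the degenerate case where $np/\log n$ does not tend to infinity (then $M=O(1)$ and one slices into, say, a single block or two, recovering Theorem~\ref{thm:reachable-from-1}).
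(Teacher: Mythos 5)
Your proposal is correct and is essentially the paper's own argument: partition the label range into $(\log n)^{O(1)}$ blocks of density $\sim\log n/n$, extract in each block a path of length $(1-o(1))e\,np_t$ from the previous endpoint via the random-recursive-tree embedding (Theorem~\ref{thm:reachable-from-1}/Lemma~\ref{lem:urrt}) on the as-yet-unused vertices, concatenate, and union bound over blocks. The only (immaterial) difference is that you slice the exponential labels into consecutive intervals, as in the proof of Proposition~\ref{thm:longestpaths_infinity}, while the paper builds genuinely independent layers with max-labels precisely to make the conditioning step you flag as the main obstacle trivial.
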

	\begin{proof}
		For $p =c' \log n/n$ with $c' \leq 1$, the statement follows from Theorem~\ref{thm:reachable-from-1}. 
Let us write the probability $p$ as $  p = B c\log n/n$ for some integer $B = B(n) = \log n^{(O(1))}$ and $c \in (1/2, 1)$.
		
		We sample $G_{p'}$ with $p' < p$ in \textit{layers} as follows. Let $L_1, L_2, \ldots, L_B$ be mutually independent random temporal graphs on $[n]$ with probability $q=c \log n/n$ sampled as follows:~in $L_i$, each edge $e$ is present with probability $q$, and subject to that, it gets a uniformly random label $U_e(i) \in [(i-1)q, iq)$; otherwise, we write $U_e(i) = -1$.
		Clearly each $L_i$ has the claimed distribution.
		
		Informally, we form $G_{p'}$ as a union of $L_1, \ldots, L_B$, and if an edge belongs to two layers $L_i$ and $L_j$, it gets its label from $\max(i,j)$. In other words, $(G, M)$ is a labelled graph, where $G$  is the union of $L_1, \ldots, L_B$, and the label $M_e$ of the edge $e$ is 
		$$M_e = \max(U_e(1), U_e(2), \ldots, U_e(B))~.$$
		The ordered graph induced by $M$ on $G$ is distributed as $G_{p'}$ with $1-p' = (1-q)^B > 1- qB = 1-p$. This follows from the fact that the labels $M$ on the graph $G$ are still i.i.d.
We refer to the edges of $G$ as \textit{i-single edges} if they occur in only one of the layers $L_1 \cup \dots \cup L_i$.
		
		We inductively build an increasing path in $G$ as follows. $P_1$ is a path in $L_1$ from $1$ to some vertex $v_2$ of length $eqn(1-\eps)$, which exists with high probability by Theorem~\ref{thm:reachable-from-1}.
		
		Assume that $P_i$ is an increasing path of length at least $ieqn(1- \eps)$ in $L_1 \cup \dots \cup L_i$ from $1$ to $v_i$ consisting of $i$-single edges. Now we expose $L_{i+1} \cap P_i$. We assume that no edge of $L_{i+1}$ is on the path $P_i$, as this occurs with probability
				$$(1-q)^{|P_i|} \geq 1- q|P_i| \geq 1- \frac{\log n}{n}\cdot epn \geq 1- n^{-1/2}.$$
		Finally, let $K = K_i = [n] \setminus V(P_i) \cup \{ v_i\}$, so $K \geq n(1-ep)$. By Theorem~\ref{thm:reachable-from-1}~\ref{it:height}, with probability at least $1-n^{-\Omega(\eps^2)}$, $L_{i+1}[K]$ has an increasing path of length at least $(1- \eps/2) eq|K| \geq (1-\eps) eqn$. 
		Denote the other endpoint of this path by $v_{i+1}$, and let $P_{i+1}$ be the obtained increasing path in $L_1 \cup \dots \cup L_{i+1}$. The failure probability in step $i$ is at most $n^{-\Omega(\eps^2)}$.
		
		Thus we may continue the process until $i=B$, obtaining the desired path $P_B$. Recalling that $B = (\log n)^{O(1)}$, the failure probability is at most $B n^{-\Omega(\eps^2)} = o(1)$.
	\end{proof}



\section{Shortest increasing paths}
\label{sec:shortest}


In this section, we assume that $c>1$. Let
$\psi(x)=x\log(ce/x)$. Recall that the equation $\psi(x)=1$ has two
distinct solutions $\gamma(c)<\beta(c)$. Moreover, $\psi(x)>1$ for all
$x\in (\gamma(c),\beta(c))$. We prove that for any
$x\in (\gamma(c),\beta(c))$ with high probability, there exists an
increasing path containing $(1+o(1))x \log n$ edges. This will
simultaneously prove the upper bound for $\ell(1,2)$
(part \eqref{it:shortest-12} of Theorem~\ref{thm:shortestpaths}) and the lower
bound for $L(1,2)$
(part \eqref{it:longest-12} of Theorem~\ref{thm:longestpaths_log}).

\subsection{The general strategy}
\label{sec:branching_strategy}

In this section, we generate the random temporal graph using
independent uniform edge labels: each edge $(i,j)$ is assigned a label
$U_{ij} \sim \Unif[0,1]$, and an edge is kept if $U_{ij} \leq p$. The
resulting ordered graph is denoted by $G_p =G_p(U)$. Recall that
$G_I = G_I(U)$ is the random temporal graph consisting of edges whose
labels are in a given interval $I \subset [0,1]$.

The strategy consists of looking for increasing paths from vertex $1$
along which the labels increase roughly as they should to have length
$x\log n$ along the whole range. Similarly, we look for decreasing
paths from vertex $2$, again with the constraints that the labels decrease at
a rate that ensures that, if extended for the whole range of labels,
the length would be $x \log n$. We only conduct this search up to half
the distance from each end, namely $\tfrac 12 x\log n $, and show that
with high probability the two sets of end points of the path must
intersect. This is because, for $x\in (\gamma(c),\beta(c))$, with high
probability the sets at distance $\tfrac x 2 \log n$ are of size at
least $n^{1/2}$.

\subsection{The branching process construction} 
\label{sec:branching_process}
We now describe the construction of the increasing paths from vertex $1$ in $G_{[0, p/2]}$. In the following section, the construction of the decreasing paths in $G_{[p/2,p]}$ from $2$ is done similarly and symmetrically.

\begin{lemma}
	\label{l:bp-from-1} Let $p = c \log n$ for some $c>1$ and  let $x \in (\gamma(c), \beta(c))$.
	With high probability, $G_{p/2}$ contains a tree $T_1$ which consists of increasing paths starting at vertex $1$ and has at least $n^{1/2 + \delta}$ leaves at distance  $\frac 12 x \log n \pm (\log n)^{1/2}$ from vertex $1$ for some constant $\delta = \delta(x)>0$.
\end{lemma}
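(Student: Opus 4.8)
The strategy is to run a Galton--Watson-type exploration of increasing paths from vertex~$1$ inside the label interval $[0,p/2]$, but with a careful discretization of the label axis so that we can track how many vertices sit at a given ``label level'' as well as at a given graph distance. Concretely, I would partition $[0,p/2]$ into $K = \tfrac12 x \log n$ consecutive sub-intervals $I_1,\dots,I_K$, each of width $p/(2K) = \tfrac{c\log n}{x\log n}\cdot\tfrac1n = \tfrac{c}{xn}$. I then build the tree $T_1$ level by level: at step $t$, having a set $A_t$ of vertices reached by increasing paths of length $t$ whose last edge has label in $I_t$, I expose the edges from $A_t$ with labels in $I_{t+1}$ that land outside the already-explored set, and let $A_{t+1}$ be the set of new endpoints. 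Each vertex in $A_t$ sends, in expectation, $\sim |V\setminus\text{explored}|\cdot \tfrac{c}{xn}$ edges into level $I_{t+1}$, i.e.\ roughly $c/x$ children as long as the explored set is $o(n)$. Since $x < \beta(c)$ and $\psi(x) = x\log(ce/x) > 1$ on $(\gamma(c),\beta(c))$ — equivalently, the "expected population after $\tfrac12 x\log n$ generations'' is $n^{\psi(x)/2 + o(1)}$, which exceeds $n^{1/2}$ — the branching process is strictly supercritical in the relevant sense and the population grows to size $n^{1/2+\delta}$ for some $\delta=\delta(x)>0$ by generation $\tfrac12 x\log n$, provided it has not yet saturated a constant fraction of the vertex set (which it has not, since $n^{1/2+\delta} = o(n)$ for $\delta$ small).

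The key steps, in order, are: (1) Set up the layered exploration and verify the one-step offspring distribution: conditionally on the history, the number of new level-$(t+1)$ vertices born from $A_t$ stochastically dominates $\mathrm{Bin}\big((1-o(1))n,\, 1-(1-\tfrac{c}{xn})^{|A_t|}\big)$ as long as $|A_t|$ and the cumulative explored set are $o(n)$, so the process dominates a Galton--Watson process with offspring mean $\mu = (1-o(1))c/x > 1$ (note $c/x > 1$ for $x<\beta(c)$, since $\beta(c)\le c\cdot$ something... actually one checks $x\log(ce/x)=1$ with $x=\beta$ forces $\beta<ec$, but we genuinely need $c/x>1$, which holds because $\beta(c)<c$ fails for large $c$ — so more care: the right comparison is with the \emph{total} expected growth $\prod \mu_t$, not a fixed $\mu$; I would instead track $\E|A_t|$ directly and show $\E|A_t| = n^{t/K\cdot \log(c e / x) + o(1)}$ type growth). (2) Second-moment / martingale control: show $|A_t|$ concentrates around its mean until it reaches $n^{1/2+\delta}$ — here I would use that while $|A_t|$ is at least polylogarithmic, the normalized process $|A_{t+1}|/(\mu_t|A_t|)$ is close to $1$ with failure probability exponentially small in $|A_t|$, and a union bound over the $O(\log n)$ levels. (3) Stopping and counting: run until the first level $t^\star \le \tfrac12 x\log n$ where $|A_{t^\star}| \ge n^{1/2+\delta}$; show $t^\star = \tfrac12 x\log n \pm (\log n)^{1/2}$ by comparing the deterministic growth rate to the target exponent and using concentration to pin down the fluctuation to $O((\log n)^{1/2})$ levels; the leaves of $T_1$ at that stopping level (or, if needed, run a few extra levels to land exactly in the window $\tfrac12 x\log n \pm (\log n)^{1/2}$) give the claimed set. (4) Check that the explored set never exceeds, say, $n^{0.9}$, so the "$-o(n)$'' corrections above are legitimate throughout.

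The main obstacle I anticipate is step (3): controlling the \emph{fluctuation in the generation index} rather than in the population size. The population grows multiplicatively, so an additive error of $(\log n)^{1/2}$ in $t$ corresponds to a multiplicative error of $\mu^{(\log n)^{1/2}} = n^{\Theta((\log n)^{-1/2})} = n^{o(1)}$ in the population, which is exactly the slack we have. Thus I need the deterministic approximation $|A_t| \approx (c/x)^t$ (more precisely $n^{\psi(x)t/(x\log n)}$) to be accurate to within a factor $n^{o(1)}$ uniformly over $t \le \tfrac12 x\log n$, and I need to rule out the branching process dying out early — but since the offspring mean is bounded away from $1$ and $|A_1|$ is already of order $c/x$ wait no, $|A_1|\approx c/x$ is a constant, so extinction has positive probability. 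The fix is standard: either condition on survival and absorb the constant-probability event into the "with high probability'' statement is \emph{not} allowed since we need w.h.p.; instead, start the count not from vertex~$1$ alone but observe that once the population reaches, say, $\log^2 n$, extinction thereafter has probability $e^{-\Omega(\log^2 n)}$, and the population reaches $\log^2 n$ within $O(\log\log n)$ generations with probability $1-o(1)$ because vertex $1$ has $\sim c\log n / (2x) \cdot$ ... hmm, vertex $1$'s total degree in $[0,p/2]$ is $\sim \tfrac{c\log n}{2}$, which is already $\omega(1)$, so $|A_1|$ is actually of order $\log n$, not constant, and then the survival probability is $1 - n^{-\omega(1)}$ from the start. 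I would lead with this observation to kill the extinction issue cleanly, then the remaining work is the concentration argument of steps (2)--(3).

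Finally, a secondary technical point worth flagging: the sub-intervals must be chosen so that the \emph{per-edge} label is genuinely uniform on each $I_{t+1}$ conditionally on lying there, which is automatic for uniform labels $U_{ij}$, and the memorylessness that makes the layers independent is simply the independence of $\{U_{ij} \in I_s\}$ across disjoint~$s$ combined with conditional uniformity — so there is no exponential-distribution subtlety here, unlike the $G_p(W)$ constructions earlier in the paper. The symmetric construction of the decreasing tree $T_2$ from vertex~$2$ in $[p/2,p]$ is identical after reversing the label order, and I would just remark that it follows "similarly and symmetrically'' as the lemma statement already anticipates.
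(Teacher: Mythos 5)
Your plan has a genuine gap at its core: the growth-rate claim for the scheduled exploration is wrong. If you force the $t$-th edge of the path to have its label in the $t$-th window $I_t$ of width $c/(xn)$, then conditionally on the history each vertex of $A_t$ produces on average about $n\cdot\frac{c}{xn}=c/x$ children, so $\EXP|A_t|\approx (c/x)^t$ and after $K=\tfrac x2\log n$ levels the expected population is $n^{\frac x2\log(c/x)}=n^{(\psi(x)-x)/2}$, \emph{not} $n^{\psi(x)/2+o(1)}$ as you assert. The factor you are missing is exactly the combinatorial gain from letting the label increments fluctuate around the linear schedule: for a path of length $k$ the probability of being increasing with labels in $[0,p/2]$ carries a $1/k!$, whereas the rigid one-edge-per-window schedule replaces $1/k!$ by $K^{-k}$, losing a factor $K^k/k!\approx e^k$ --- this is precisely the $e$ inside $\psi(x)=x\log(ec/x)$. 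Since $\psi(x)>1$ only gives $x\log(c/x)>1-x$, the exponent $(\psi(x)-x)/2$ is below $1/2$ throughout (it tends to $(1-x)/2$ at both endpoints of $(\gamma(c),\beta(c))$), and the scheduled process is even \emph{subcritical} whenever $x>c$ (e.g.\ $x$ near $\beta(c)\approx ec$ for large $c$). So as written the construction cannot reach $n^{1/2+\delta}$, and no amount of concentration in steps (2)--(3) repairs it. The paper's proof is built to avoid exactly this loss: it partitions $[0,p/2]$ into intervals of width $cA/n$ with $A$ a \emph{large constant} and, inside each interval, explores unconstrained increasing paths of length $m\approx xA$; the per-block offspring mean is then about $\binom{(1-\eps)n}{m}(cA/n)^m\approx e^{(1-\eps)A\psi(x)}>1$, so the block discretization only costs an $\eps$ in the exponent, and iterating over $r\approx(\log n)/(2A)$ blocks yields population $n^{(1-o(1))\psi(x)/2}\geq n^{1/2+\delta}$.

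A second, smaller error: your fix for early extinction does not work within your own layering. The claim that $|A_1|$ has order $\log n$ uses vertex $1$'s full degree in $[0,p/2]$, but in your scheme only edges whose label falls in the first window of width $c/(xn)$ are admissible, so $|A_1|$ has constant mean $c/x$ and the process dies with probability bounded away from $0$; you cannot absorb that into a w.h.p.\ statement. The paper handles this by spending a first interval of width $cA/n$ to collect $\Theta(A)$ ancestors and running an (essentially independent) block-branching process from each, driving the failure probability down to $q^{\Theta(A)}$; it also needs a second-moment/Kesten--Stigum step to turn supercriticality into ``the generation at level $r-1$ has size $n^{1/2+\delta}$ with probability bounded away from $0$,'' together with bookkeeping that only $o(n)$ vertices are ever discarded. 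Your outline contains analogues of these last two ingredients, but without repairing the growth-rate issue (by branching in blocks rather than edge by edge, or some equivalent device recovering the $e^k$ factor) the proof does not go through.
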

\begin{proof}
	We fix constants $x\in (\gamma(c), \beta(c))$ and $A>0$ to be chosen
	later. Let $m=\lfloor x A \rfloor$. We split the interval $[0,p/2]$
	into $r=\lfloor (\log n)/(2A)\rfloor$ disjoint intervals
	$I_j = [jcA/n, (j+1)cA/n)$ for $0\le j<r$. The first interval $I_0$
	is used differently from the remaining ones
	$(I_1, \ldots, I_{r-1})$: the intervals $I_1, \ldots, I_{r-1}$ are
	used to construct a supercritical branching process that builds the
	desired path, while the first interval is used to ensure that we
	have enough starting points to achieve a low failure
	probability. 
	The constructed paths will consist of $1 + (r-1)m$ edges, which is
	indeed $\frac 12 x \log n \pm (\log n)^{1/2}$ for large $n$.\
	
	To build the branching process, we need to ensure independence and
	avoid collisions (this is avoidable for short paths, but not for
	long paths). Towards this objective, we now fix an arbitrary
	$\epsilon>0$. We maintain a set of vertices which are not
	considered, whose size is at any time at most $\epsilon
	n$. Initially, we discard vertices arbitrarily to keep a
	\textit{target set} of size $\lceil (1-\epsilon)n\rceil$, but later
	on, we first discard the vertices that have been used before, and
	then complete with arbitrary vertices. This guarantees that we
	always work with a target set of nodes of fixed size
	$\lceil (1-\epsilon)n\rceil$. This only runs into trouble if, at
	some point, we have discovered more than $\epsilon n$ vertices, but
	we prove that this only ever happens with small probability.
	
	In stage 0, we use the first interval  $I_0 =[0,cA/n)$ to discover many nodes that will be used as the ancestors of branching processes. Let $\hat \xi$ denote the number of neighbours of vertex $1$ in the target set of size $\lceil (1-\epsilon)n\rceil$. Then $\hat \xi$ is a binomial random variable with parameters $\lceil (1-\epsilon) n\rceil$ and $cA/n$, so that 
	\[\EXP[\hat \xi] = \lceil (1-\epsilon) n\rceil \cdot \frac {cA}n \ge (1-\epsilon)cA\,.\]
	Let $B_0$ be the event that $\hat \xi \le cA/2$. Then, by a standard binomial tail bound,
	we have
	\begin{equation}\label{eq:bound_number_ancestors}
		\PROB(B_0) 
		= \PROB(\text{Bin}(\lceil (1-\epsilon)n\rceil, cA/n) \le cA/2)
		\le e^{-cA/10}\,,
	\end{equation}
	for all $\epsilon>0$ small enough and all $n$ large enough. If it turns out that $\hat \xi >A$, we keep an arbitrary set $N(1)$ consisting of only $\lfloor A \rfloor$ vertices.

	We now proceed with the stages $j=1,\ldots, r-1$.
	We construct a branching process which will help us find the atypical paths in the graph. For each $j$, the $j$-th step  of the branching process consists of $m$ `levels' in the graph $G_{I_j}$. The basic ingredient is the following. For a given vertex $u$ and a given interval $I_j$ of length $cA/n$, let $S_j(u)$  be the set of vertices $v$ such that the graph $G_{I_j}$ contains an increasing path of length $m$ from $u$ to $v$ using only vertices from the current target set. Let $\xi$ be the random variable describing $|S_j(u)|$. Then, for any fixed natural number $m\ge 1$, we have
	\[\EXP[\xi] 
	= \binom{\lfloor (1-\epsilon)n\rfloor }{m}\left(\frac{cA}n\right)^m 
	\sim \left(\frac{(1-\epsilon)ceA}m\right)^m\,,\]
	as $n\to\infty$. By choosing $A\in \N$ large enough (so that $m \geq(1-\eps)xA$), we can ensure that 
	\begin{equation}
		\label{eq:exp-xi-lb}
		\EXP[\xi] \ge \left(\frac{(1-\epsilon)ce} x\right)^{(1-\epsilon)x A}
		=(1-\epsilon)^{(1-\epsilon)xA} e^{(1-\epsilon) A \psi(x)}
		\geq \left(e^{-2\epsilon x + \psi(x)} \right)^{(1-\epsilon)A}\,.
	\end{equation}
	Recall that, since $c>1$, the equation $\psi(x)=x\log(ec/x)=1$ has two solutions $\gamma(c)<\beta(c)$, and that $\psi(x)>1$ for $x\in (\gamma(c),\beta(c))$. 
	In particular, by choosing $\epsilon>0$ small enough, we may ensure that $\EXP[\xi]>1$. 
	
	Later, we shall also need that $\EXP[\xi^2]<\infty$. We provide a proof for the sake of completeness.
	\begin{claim}\label{cl:moment_xi}There is a universal constant $C\in \R$ (independent of $n$) such that $\EXP[\xi^2]\le C$.
	\end{claim}
	\begin{proof}
		Observe that, by construction, $\xi$ is stochastically dominated by the number of individuals in a branching process whose progeny distribution is binomial with parameters $(n, cA/n)$ (this is an upper bound on the number of simple paths of length $m$). As $n\to\infty$, $\text{Bin}(n,cA/n)$ converges in distribution to a random variable with Poisson$(cA)$ distribution. So, for all $n$ large enough, and for all $i\ge 1$ we have $\PROB(\text{Bin}(n,cA/n)= i) \le 2 \PROB(\text{Poisson}(cA)=i)$. Let $f$ be the probability generating function of a Poisson$(cA)$ random variable $N$, namely $f(s)=\EXP[s^N]$. Let $f_n$ be the probability generating function of a Binomial$(n,cA/n)$ random variable. Then $f_n(s)\le 2f(s)$ for all $n$ large enough. It is standard that $\xi$ is stochastically dominated by a random variable whose probability distribution is the $m$-th composite function $f_n\circ f_n \circ \dots \circ f_n=f_n^{\circ m}$. However, by the previous arguments, $f_n^{\circ m}$ has a positive radius of convergence, independent of $n$. This implies that all moments of $\xi$ are finite, and thus that $\EXP[\xi^2]<\infty$.	
	\end{proof}

	Fix a vertex $v_0 \in N(1)$. In a target set which avoids $N(1)\cup \{ 1\}$, we expose increasing paths from $v_0$ in $G_{I_1}$ to discover the set $S_1^* = S_1(v_0)$.
	This step is used repeatedly to construct a branching process containing the nodes at distances $mj$, for $j=1, 2, \dots, r-1$ from $v_0$ (recalling that $m(r-1) \sim x \log n$). Then the same exploration process is run for each $v \in N(1)$ to \textit{boost} the survival probability. To ensure independence and the global increasing property of the paths, the interval $[jcA/n, (j+1)cA/n]$ is used to construct the part of the path between distance $mj$ and $m(j+1)$.
	
	Let us describe the $j$-th step of the branching process. Suppose that we have discovered a set $S_{j-1}^*$ of vertices at distance $(j-1)m$ from $v_0$.  
	We then use breadth-first search in $G_{I_j}$ for each $u \in S_{j-1}^*$ up to distance $m$, discarding the previously discovered vertices in the process. This yields the sets $S_j(u)$ for $u \in S_{j-1}^*$ whose sizes are distributed as $\xi$, and let 
	$$S_j^*(u) = \bigcup_{u \in S_{j-1}^*(u)}S_j(u)~.$$  
	Note that the number of discarded vertices while exposing $S_j(u)$ is at most  $m|S_j(u)|$. Let $B_1$ be the bad event that we discover more than $n/ \log n$ vertices before reaching level $r$. Then, recalling that we only keep at most $A$ ancestors from stage 0, Markov's inequality implies that 
	\begin{equation}
		\label{eq:B1}\PROB(B_1)
		\le \frac {\log n} {n} \sum_{i=0}^r A \EXP[\xi]^i \le n^{-1/4}\,,
	\end{equation}
	for any $n$ large enough (depending on $\epsilon$ and $A$).
	
	We now claim that
	\begin{equation}
		\label{eq:bp-from-v0}
		\PROB(|S_{r-1}(v_0)| < n^{1/2+\delta}) \leq  q
	\end{equation}
	for some constants $\delta >0$, $q<1$ and for large $n$. 
	Although the branching process is only run to level $r-1$ in the graph, we may then complete it using independent copies of $\xi$. 
	To this end, let $(Z_i)_{i\ge 1}$ be the branching process defined from a single ancestor by $Z_1=1$, and $Z_{i+1} = \xi^n_1 + \xi^n_2 + \dots + \xi^n_{Z_i}$, where $(\xi^n_j)_{n\ge 0, j\ge 1}$ are iid copies of the variable $\xi$. By the choice of $\epsilon>0$, we know that $\EXP[\xi]>1$, so that the process is supercritical. It follows that, denoting by $\mathcal E$ the event that there exists some $n\ge 0$ for which $Z_n= 0$, we have $\PROB(\mathcal E)=q'<1$, and the process survives forever (and in particular to level $r-1$) with probability $1-q'>0$.
	
	Furthermore, the non-negative martingale $Z_i/\EXP[\xi]^i$ converges almost surely to a limit $W$. 
	Claim~\ref{cl:moment_xi} implies that $\EXP(\xi^2)< \infty$. Hence, by the Kesten--Stigum Theorem (see, e.g.,~\citet[Chapter~I, Section 6, Theorem 2]{AtNe1972}), $W$ is almost surely positive on the survival event $\mathcal E^c$ , that is,
	$$\PROB(W=0~|~\mathcal E^c)=0~.$$
	Since $r\to\infty$ as $n\to\infty$, $Z_r/\EXP(\xi)^r$ converges in distribution to $W$.
	Using~\eqref{eq:exp-xi-lb}, and recalling that $r =\lfloor (\log n)/(2A) \rfloor$,
	\begin{equation}
		\EXP(\xi)^{r-1}
		\geq \left(e^{-2\epsilon x + \psi(x)} \right)^{(1-\epsilon)A(r-1)}
		\geq n^{(1-2 \epsilon)\psi(x)/2}~.
	\end{equation}
	
	Since $\psi(x)>1$ because $x\in (\gamma(c),\beta(c))$, this can be made at least $n^{1/2+2\delta}$ for some small $\delta>0$ by making $\epsilon>0$  small enough once again. 
	With this choice, and for any $\varepsilon '>0$ and for sufficiently large $n$, have
	\[\PROB(Z_{r-1} < n^{1/2+\delta}~|~\mathcal E^c) \le \PROB(W \le n^{-\delta}~|~\mathcal E^c) + \varepsilon' \le 2\varepsilon'\,\]
	for all $n$ large enough. The previous estimate, the fact that $|S_{r-1}(v_0)|$ is distributed as $Z_{r-1}$, and $\PROB(\mathcal E) \leq q'<1$ imply that~\eqref{eq:bp-from-v0}, as claimed.

	We have established that the constructed branching process from a single vertex $v_0 \in N(1)$ reaches $n^{1/2+\delta}$ vertices with positive probability.
	Now we run the same exploration process to define a set $S_{r-1}(v)$ for each $v \in N(1)$; using~\eqref{eq:B1}, we only discard $o(n)$ vertices in total.
	The probability that  $ |S_{r-1}(v)| \geq n^{1/2+\delta}$ for some $v \in N(1)$ is at least $1-q^A$, which can be made arbitrarily close to $1$ by choosing a sufficiently large constant $A$.
\end{proof}

\subsection{The construction of atypical paths: Proof of part ~\eqref{it:longest-12} of Theorem~\ref{thm:longestpaths_log} and part ~\eqref{it:shortest-12} of Theorem~\ref{thm:shortestpaths}} 
\label{sub:the_construction_of_atypical_paths}

Now we are prepared to complete the  proof of part~\eqref{it:longest-12} of Theorem~\ref{thm:longestpaths_log} and part~\eqref{it:shortest-12} of Theorem~\ref{thm:shortestpaths}.

As before, let $x \in (\beta(c), \gamma(c))$, and let $p = (c \log n)/n+1/n$. It suffices to prove that there is  a path of length $x \log n (1 + o(1)) $ in $G_p$ (as opposed to $G_{c \log n / n}$).
Construct the tree $T_1$ in $G_{[0, c \log n /(2n)]}$ using Lemma~\ref{l:bp-from-1}. Similarly, construct the tree $T_2$ consisting of decreasing paths from vertex 2 with labels in $[p- c \log n /(2n), p]$. Note that $T_2$ can be taken to be vertex-disjoint from $T_1$, since $|V(T_1)| = o(n)$. Let $L_1$ and $L_2$ denote the sets of leaves of $T_1$ and $T_2$. The number of vertex pairs in $L_1 \times L_2$ is at least $n^{1 + \delta}$, and the labels of these pairs have not been exposed while constructing $T_1$ and $T_2$. Hence, the probability that there is an edge $e \in L_1 \times L_2$ whose label is in the `middle interval' $(c \log n /(2n), c \log n / (2n)+1/n)$ is at least $1 - O\left(e^{-n^\delta} \right)$. This edge $e$ completes an increasing path from $1$ to $2$ of length $(1+o(1)) x \log n $, as required.

\subsection{From typical to worst case shortest paths: Proof of parts \eqref{it:shortest-1j} and \eqref{it:shortest-ij} of Theorem~\ref{thm:shortestpaths}}
\label{sec:typical-worst}

It remains to prove parts \eqref{it:shortest-1j} and \eqref{it:shortest-ij} of Theorem~\ref{thm:shortestpaths}.
We do this by relating the `worst-case' shortest path to typical shortest paths in a modified graph (where only the range of edge labels is changed). Indeed, in the graph, there are always atypical vertices that do not have any edge in a given range of labels of length around $(\log n)/n$. Getting in or out of these vertices already `costs' a significant portion of the interval of labels, and thus the typical length of the shortest path between such vertices is much longer than between two typical vertices. This explains the constant $\gamma(c-1)$ and $\gamma(c-2)$, accounting for such a fixed cost at one or both ends, respectively. We prove that this is essentially the worst situation.

We first establish the negative result, that is, a lower bound on
$\ell(1,j)$ and $\ell(i,j)$.  Let $I_1 = [0, (1-\eps) \log n / n]$,
$I_3 = [(c-1+\eps) \log n /n, c \log n / n]$ and
$I_2= [0, c\log n / n] \setminus (I_1 \cup I_3)$.  As in the previous
section, we sample the graph $G_p$ as the union of independent copies
of $G_{I_1}$, $G_{I_2} $ and $ G_{I_3}$. If an edge $e$ is in more
than one of the three graphs, it \textit{chooses} its minimal label
for $G_p$, and we ignore the lower-order change in edge probability
caused by this sampling.  Suppose that $c\ge 2$. Let $N$ be the number
of vertices which are isolated in
$G_{I_3}$. 
Then, with high probability $N\ge 1$. Indeed, since we only require to
avoid an interval of length $(1-\eps)\log n / n$, we have
$\EXP[N]= n (1-(1-\eps)\log n/n)^{n-1} \sim n^\eps$, while a
similar argument yields $\EXP[N^2]\sim n^{2\eps}$. Chebyshev's Inequality then implies that $\PROB(N>0)\to 1$. Let $j^\star$ be
such a
vertex. 
By  part~\eqref{it:shortest-12} of Theorem~\ref{thm:shortestpaths}, with high probability, 
the shortest path between $1$ and $j^\star$ in
$G_{I_1} \cup G_{I_2}$ has length at least
$(\gamma(c-1+\eps) +o(1)) \log n$. By continuity of
$\gamma(\cdot)$ and since $\eps>0$ was arbitrary, it follows that,
for any $\eps'>0$, with high probability
$\max \{\ell(1,i): 1\le i\le n\}\ge (\gamma(c)-\eps') \log n$.

The argument is easily adapted to find two vertices, one which is isolated in $G_{I_1}$, and another one which is isolated in $G_{I_3}$. This shows that for $c \geq 3$, $\max\{\ell(i,j): 1\le i,j\le n\} \ge (\gamma(c-2)-\eps') \log n$ with high probability. 

It thus remains to prove the claimed upper bounds on $\ell(1, j)$ and
$\ell(i,j)$. For this, we prove that for $c>2$, with high probability,
all pairs of vertices are connected by a path whose length is close to
$\gamma(c-2)\log n$. Let $\eps>0$  be such that $c>2+2\eps$. 
We split the range $[0,p]$ into $I_1=[0,p_1)$, $I_2=[p_1,p_2)$
and $I_3=[p_3,p]$, with $np_1=(1+\eps)\log n$ and
$n p_2 = (c-1-\eps)\log n$. For an interval $I\subseteq [0,1]$,
let $B_k(u, I)$ denote the set of vertices that $u\in [n]$ can reach
using an increasing path of length to $k$ in
$G_I$. 
Let $k_2=\lfloor (\gamma(c-2-2\eps)+\eps) \log
n\rfloor$. Then, by exchangeability of the vertices,
\begin{align*}
    \PROB\left(2\in B_{k_2}(1,I_2)\right)
    &\le  (1-\eps/2) \PROB \left(| B_{k_2}(1, I_2)|< (1-\eps/2)n\right) \,+\,\PROB\left(|B_{k_2}(1,I_2)|\ge (1-\eps/2)n\right)  \\
    &=  1 - (\eps/2) \cdot \PROB\left(B_{k_2}(1, I_2)< (1-\eps/2)n\right)\,.
\end{align*}
Part \eqref{it:shortest-12} of Theorem~\ref{thm:shortestpaths} implies that the left-hand side above tends to $1$, so that $|B_{k_2}(1, I_2)|\ge (1-\eps/2)n$ with high probability. Now, if there is an edge with label in $I_3$ between $i\in B_{k}(1,I_2)$ and some vertex $j$, then $j\in B_{k_2+1}(1,I_2 \cup I_3)$. Furthermore, if $j$ is not already in $B_{k_2}(1, I_2)$, and this set turns out to be of size at least $(1-\eps/2)n$, then there are at least $(1-\eps/2)n - \Delta$ potential edges
 whose label might be in $I_3$, where $\Delta$ denotes the maximal degree of the entire graph. As a consequence, using the union bound,
\begin{align*}
    \PROB(B_{k_2+1}(1, I_2 \cup I_3 )\ne[n])
    & \le \PROB(|B_{k_2}(1, I_2 )|< (1-\eps/2)n) + \PROB(4\Delta > \eps n) + n (1-|I_3|)^{(1-3\eps/4)n}\\
    & \le o(1) + n\exp(-(1-3\eps/4) (1+\eps) \log n) = o(1)\,,
\end{align*}
as $n\to\infty$; here we used the classical fact that the maximum degree of ${\cal G}(n,c\log n/n)$ is with high probability smaller than $\eps n/4$ for any $\eps>0$. 
Together with the continuity of $\gamma(\cdot)$, this completes the proof of part \eqref{it:shortest-1j}. Now, just as above, this shows that the set $S=\{u\in [n]: B_{k_2+1}(u, I_2\cup I_3)=[n]\}$ has cardinality at least $(1-\eps/2)n$ with high probability. From there, an argument similar as the one we have just used, but using an extra edge with label in $I_1$ (instead of $I_3$) shows that with high probability, the set $S'=\{u\in [n]: B_{k_2+2}(u, I_1\cup I_2 \cup I_3)=[n]\}$ is actually $[n]$ with high probability. This concludes the proof of part \eqref{it:shortest-ij}.

	\bibliographystyle{apalike}
	\bibliography{temporal}

\end{document}